\newtheorem{thm}{Theorem}
\newtheorem{cor}[thm]{Corollary}
\newtheorem{lem}[thm]{Lemma}
\newtheorem{pro}[thm]{Proposition}
\newtheorem{defi}[thm]{Definition}
\newtheorem{rem}[thm]{Remark}
\newenvironment{proof}{\noindent \textbf{{Proof.}} \sf}
\def\qed{\hfill $\diamond$ \bigskip}
\def\B{{\mathcal B}}
\def\C{{\mathcal C}}
\def\D{{\mathcal D}}
\def\F{{\mathcal F}}
\def\K{{\mathcal K}}
\def\U{{\mathcal U}}
\def\lim{\mathop{\rm lim}\nolimits}
\def\H{\mbox{{\rm H}}}
\def\Im{\mathsf{Im}}
\def\aut{\mathsf{Aut}}
\def\gal{\mathsf{Gal}}
\def\deg{\mathsf{deg}}
\def\pigr{\Pi^{\mbox{\sf \footnotesize  gr}}_1(\B,b_0)}
\begin{document}

\sf

\title{On universal gradings,  versal gradings and Schurian generated  categories}
\author{Claude Cibils, Mar\'\i a Julia Redondo and Andrea Solotar
\thanks{\footnotesize This work has been supported by the projects  UBACYTX475, PIP-CONICET 112- 200801-00487, PICT-2011-1510 and MATHAMSUD-NOCOSETA.
The second and third authors are  research members of
CONICET (Argentina).}}

\date{}

\maketitle

\begin{abstract}
Categories over a field $k$ can be graded by different groups in a connected way; we consider morphisms between these gradings in order to define the fundamental grading group. We prove that this group is isomorphic to the fundamental group {\it\`{a} la Grothendieck} as considered in previous papers. In case the $k$-category is Schurian generated we prove that a universal grading exists. Examples of non Schurian generated categories with universal grading,  versal grading or none of them are considered.

\end{abstract}

\noindent 2010 MSC: 16W50 55Q05 18D20

\section{\sf Introduction}

We will consider throughout this paper small categories $\B$ enriched over $k$-vector spaces, namely small categories whose sets of  morphisms are $k$-vector spaces and such that  composition is bilinear; they are called $k$-categories. In particular the endomorphisms of each object are $k$-algebras and the spaces of morphisms are bimodules.

A grading $X$ of $\B$ is a direct sum decomposition of each vector space of morphisms into homogeneous components which are indexed by elements of the structural group $\Gamma(X)$, such that the composition of $\B$ is compatible with the product of the group.  By definition the grading is connected if any element of the structural group is the degree of a homogeneous closed walk.

 Observe that a homotopy theory of loops taking into account the $k$-linear structure of a $k$-category is not available. Previous papers -- see \cite{CRS ANT 10,CRS ART 12,CRS DOC 11} -- show that gradings may be considered as a substitute, see also Remark \ref{theory}; we have inferred an intrinsic  fundamental group of a $k$-category which takes into account all its  connected gradings and which is functorial with respect to full subcategories -- see \cite{CRS PAMS 12}.

 In this paper we introduce morphisms between gradings, which are group maps  $\mu$ between the structural groups  such that there exists a homogeneous automorphism functor of $\B$ which is the identity on objects and which induces $\mu$ on the degrees of the homogeneous closed walks, see Definition \ref{morphism of grading}.

This leads to the definition of the fundamental grading group. In case there exists a universal grading $U$ we obtain that $\Gamma(U)$ is isomorphic to this group. If a versal grading $V$ exists (see Definition \ref{universal versal}) then the intrinsic fundamental group is a subgroup of the fixed points group with respect to the automorphisms of $V$.

In Section \ref{fundamental grading} we study gradings as mentioned above while in Section \ref{fundamental intrinsic} we recall the intrinsic fundamental group \emph{\`{a} la Grothendieck} obtained through Galois coverings and smash products -- see  \cite{CRS ANT 10,CRS ART 12}. In Section \ref{comparison} we prove that the fundamental grading group considered in Section \ref{fundamental grading} is indeed isomorphic to the intrinsic one.

In other words the gradings as considered in this paper provide a different approach to the theory of the intrinsic fundamental group developed in \cite{CRS ANT 10,CRS ART 12,CRS DOC 11}. Using the introduced morphisms of gradings, we show in Theorem \ref{universal fundamental} that in case a universal grading exists its structural group is isomorphic to the fundamental group.

This result is important by itself as well as with respect to the following: the proof of the analogous fact concerning universal coverings in \cite[Proposition 4.3]{CRS ART 12} is actually incomplete. Indeed, towards the end of the proof a set of elements is obtained which should provide an automorphism. For this purpose those elements need to constitute a coherent family  which means that they should correspond with  themselves through canonical group maps. However this is not proved in the cited paper and the fact is not clear.

Note that in \cite{gr} a relation between gradings and coverings is established for quivers with relations, see also \cite{grma}. In this paper we consider the intrinsic context, where the categories are not given by a presentation.

In the rest of the paper we apply the theory developed before to Schurian generated categories and other examples.
By definition, a non-zero morphism fron one objetc $b$ to an object $b'$ is called Schurian if the space of morphisms from $b$ to $b'$ is one-dimensional. The intersection of all the $k$-subcategories containing them is called the Schurian generated subcategory of $\B$.  When this intersection is $\B$ the category is called Schurian generated. Note that in the framework of $k$-categories presented by a quiver with admissible relations  this corresponds to constricted  algebras (also called constrained algebras) -- see \cite{bama}. We prove that a Schurian generated category admits a universal grading. To this end we show that its fundamental grading group is the structural group of a grading which is not necessarily connected. This grading can be restricted to the image of its degree map, providing this way a connected grading which is universal.

Finally we consider four examples. The first one is at the source of the theory of the fundamental group \emph{\`{a} la Grothendieck}. Indeed, Bongartz, Gabriel \cite{boga} and Martinez-Villa, de la Pe\~{n}a \cite{MP} have considered a fundamental group attached to a presentation by a quiver with admissible relations of a category. It is not an invariant of the isomorphism class, it depends on the chosen presentation -- see for instance \cite{buca}. We provide a one-parameter family of $k$-categories  $\B_q$ that  has a universal cover. This one-parameter family of deformations is actually trivial and the fundamental group is infinite cyclic. The second example  is the Kronecker category, which has no universal grading, but admits a versal grading. Its fundamental group is trivial. Then we show that for a monomial Schurian category the above mentioned fundamental group attached to a presentation  is isomorphic to the intrinsic fundamental group. Finally we consider the group algebra in characteristic $p$ of the cyclic group $C_p$ of order $p$, which has neither universal nor versal grading; its fundamental group is the product of the infinite cyclic group and  $C_p$.

\section{Fundamental grading group}\label{fundamental grading}

In this section we first recall definitions from \cite{CRS ANT 10, CRS ART 12}. Then we introduce morphisms of gradings, universal and versal gradings.  This leads to the definition of the fundamental grading group.

\subsection{Homogeneous walks and grading homotopy}\label{homogeneous walks}\sf

For a $k$-category $\B$ the set of objects is denoted $\B_0$ while  ${}_{b'}\!\B_b$ is the vector space of morphisms having source $b$ and target ${b'}\!$. For $f\in {}_{b'}\!\B_b$  we write $\sigma(f)=b$ and $\tau(f)={b'}\!$. A $k$-category is  \textbf{connected} if the graph of its non-zero morphisms is connected.

\begin{defi} \label{gradings}\sf   A \textbf{grading} $X$ of a $k$-category $\B$ by a group $\Gamma(X)$ is a direct sum decomposition of each morphism space $${}_{b'}\!\B_b = \bigoplus_{s\in \Gamma(X)}X^s{}_{b'}\! \B_b$$ such that $$\left(X^t{}_{b''}\B_{b'}\!\right)\left(X^s{}_{b'}\! \B_b\right) \subset X^{ts}{}_{b''}\B_b.$$
The group $\Gamma(X)$ is called the structural group of the grading. A \textbf{homogeneous} morphism $f$ of \textbf{degree $s$} is a non-zero morphism lying in a \textbf{homogeneous component} $X^s{}_{b'}\! \B_b$. We write $\deg_X f=s$.
\end{defi}

A \textbf{virtual morphism} is a pair  $(f,\epsilon)$ where $f$  is a non-zero morphism and $\epsilon$ is $1$ or $-1$. We set  that the source $\sigma$ and the target $\tau$ of $(f,-1)$ are reversed with respect to those of $f$ while for $(f,1)$ they remain unchanged. We do not compose virtual morphisms.

A \textbf{walk} $w$ with source $b$ and target ${b'}\!$ is a sequence of \textbf{concatenated} virtual morphisms
$$w=(f_n,\epsilon_n),\dots,  (f_2,\epsilon_2),(f_1,\epsilon_1)$$
verifying $$\sigma(f_1,\epsilon_1)=b, \tau(f_1,\epsilon_1)=\sigma(f_2, \epsilon_2),\dots, \tau(f_n,\epsilon_n)={b'}\!.$$
The \textbf{formal inverse} of $w$ is $w^{-1}=(f_1,-\epsilon_1),(f_2,-\epsilon_2),\dots,  (f_n,-\epsilon_n)$ with source ${b'}\!$ and target $b$.

We say that two walks $w, w'$ are  \textbf{concatenable} if the target of $w$ coincides with the source of $w'$.  We denote $w'w$ their concatenation.

A \textbf{virtual homogeneous morphism} with respect to a grading is a virtual morphism $(f,\epsilon)$ where $f$ is homogeneous. A \textbf{homogeneous walk} is a walk made of virtual homogeneous morphisms. The set of homogeneous walks from $b$ to $b'$ with respect to a grading $X$ is denoted ${}_{b'}\!HW(\B,X)_b$.

For a grading $X$ the \textbf{degree of a homogeneous walk} is by definition $$\deg_X w =\left( \deg_X  f_n \right)^{\epsilon_n}\dots \left(\deg _X f_2 \right)^{\epsilon_2}\left(\deg_X  f_1 \right)^{\epsilon_1}.$$
\begin{rem}
\label{image is a group}
Let $w$ and $w'$ be concatenable walks and let $b\in\B_0$. The following facts are easy to verify.
\begin{itemize}
\item $\deg_X w^{-1} = \left(\deg_X w\right)^{-1}.$
\item
$\deg_X w w'= \deg_X w \ \deg_X w'.$
\item
The image of the inferred degree map
$\deg_X: {}_{{b}}HW_b(\B,X)\to \Gamma(X)$ is a subgroup of $\Gamma(X)$.
\end{itemize}
\end{rem}

\begin{defi}
The grading is \textbf{connected} if $\deg_X: {}_{{b'}}HW_b(\B,X)\to \Gamma(X)$ is surjective for any objects $b$ and ${b'}$.
\end{defi}

Note that  if $\B$ is connected the degree map is surjective for a given pair of objects if and only if the degree map is surjective for any pair of objects.

\begin{rem}\label{theory}
 For a $k$-category there is no definition of homotopy of loops available as in algebraic topology taking into account its  linear structure. A first attempt  -- see \cite{boga,MP} --  is to choose  a presentation of the $k$-category and to define homotopy using minimal relations of the presentation.

 However this way of doing provides a homotopy group which varies with the chosen presentation -- see \cite{buca}. Due to this fact we have previously introduced an intrinsic fundamental group defined using coverings which are issued from connected gradings. Indeed, gradings and coverings are closely related as we will recall in the following.

In this context observe that a given grading $X$ provides a partial substitute for a homotopy theory of loops. Indeed we can consider  among homogeneous walks with same source and target an "$X$-homotopy" relation given by the degree map, namely two homogeneous walks are  \textbf{$X$-homotopic} if they have the same $X$-degree. In case $X$ is connected the $X$-homotopy classes form a group which is isomorphic to the structural group $\Gamma(X)$.
\end{rem}

\subsection{Universal and versal gradings}
Let $X$ and $Y $ be gradings of a $k$-category $\B$. A  $k$-automorphism functor $J$ of $\B$ is called \textbf{homogeneous} from $X$ to $Y$ if $J$  is the identity on objects and if the image of $X$-homogeneous morphisms of $\B$ are $Y$-homogeneous.

\begin{defi}
\label{J extended}
Let  $X, Y$ be two gradings of $\B$, $b, b' \in \B_0$.  A homogeneous automorphism $J$ from $X$ to $Y$ induces a  map   $$HW(J): {}_{b'}\!HW(\B,X)_b\to {}_{b'}\!HW(\B,Y)_b$$
$$HW(J)(w)=(J(f_n),\epsilon_n),\dots, (J(f_1),\epsilon_1)$$ for $w=(f_n,\epsilon_n),\dots, (f_1,\epsilon_1)$ an $X$-homogeneous walk.
\end{defi}
However in general a map from $\Gamma(X)$ to $\Gamma(Y)$ induced by $J$ does not exist since the image by $J$ of two homogeneous closed walks of same $X$-degree may  have different $Y$-degrees.

\begin{defi}\label{morphism of grading}
 Let $b_0$ be a base object of a connected $k$-category $\B$ and let $X$ and $Y$ be connected gradings of $\B$. A \textbf{morphism} $\mu : X\to Y$ is a group map $\mu :\Gamma(X)\to \Gamma(Y)$ such that there exists a (non-necessarily unique)  homogeneous automorphism functor $J$ from $X$ to $Y$ making commutative the following diagram

$$
\xymatrix{
{}_{b_0}HW(\B,X)_{b_0} \ar@{->>}[d]_{\deg_X} \ar[r]^{HW(J)} & {}_{b_0}HW(\B,Y)_{b_0}\ar@{->>}[d]^{\deg_Y}\\
\Gamma(X) \ar[r]_{\mu}   &\Gamma(Y).}
$$

\end{defi}

\begin{defi}\label{universal versal}
A connected grading $U$ is \textbf{universal} if for any connected grading $X$ there exists a unique morphism  $\mu: U\to X$. A connected grading $V$ is \textbf{versal} if for any connected grading $X$ there exists at least one morphism $\mu: V\to X$.
\end{defi}

Concerning the notion of \emph{versal} see for instance the appendix by J.P. Serre in \cite{dure}.

Of course if a universal grading exists, it is unique up to an isomorphism of gradings. In general universal gradings do not exist as we shall see in the last section. Nevertheless we will prove  that Schurian generated $k$-categories admit universal gradings.

\subsection{Graded coherent families}

Next we will define the fundamental grading group; in the following section we will prove that it is isomorphic to the intrinsic fundamental group defined as the automorphism group of a fibre functor. In this subsection we will show  that the structural group of a universal grading is isomorphic to the fundamental grading group.
\begin{defi}\label{fgg}
Let $\B$ be a connected $k$-category, and let $b_0$ be a fixed object.
An element of the \textbf{fundamental grading group} $\Pi^{\mbox{\sf \footnotesize  gr}}_1(\B,b_0)$ is a family  $\gamma = \left(\gamma_X\right)_X$ where $X$ varies among the connected gradings,  $\gamma_X \in\Gamma(X)$, and which is  \textbf{graded coherent} namely $\mu(\gamma_X)=\gamma_Y$ for each morphism $\mu:X\to Y.$ The product is pointwise.
\end{defi}

\begin{thm}\label{universal fundamental}
Let $\B$ be a connected $k$-category with a given object $b_0$ admitting a universal grading $U$. Then
$$\pigr\simeq \Gamma(U).$$
\end{thm}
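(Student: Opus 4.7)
The plan is to construct mutually inverse group homomorphisms $\Phi:\pigr\to\Gamma(U)$ and $\Psi:\Gamma(U)\to\pigr$ using the universal property of $U$.

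First I define $\Phi$ as evaluation at $U$, namely $\Phi(\gamma)=\gamma_U$; this is a group homomorphism because the product in $\pigr$ is defined pointwise. To define $\Psi$, for every connected grading $X$ let $\mu_X:U\to X$ denote the unique morphism provided by the universal property, and set $\Psi(g)=(\mu_X(g))_X$. The content of the theorem lies in checking that $\Psi(g)$ is graded coherent, i.e.\ that $\nu(\mu_X(g))=\mu_Y(g)$ for every morphism of gradings $\nu:X\to Y$.

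The key preliminary step is to observe that morphisms of gradings compose. If $\mu_1:X\to Y$ is witnessed by a homogeneous automorphism $J_1$ and $\mu_2:Y\to Z$ by $J_2$, then $J_2\circ J_1$ is an automorphism of $\B$ which is the identity on objects and sends $X$-homogeneous morphisms to $Y$-homogeneous and then to $Z$-homogeneous ones; stacking the commuting squares of Definition \ref{morphism of grading} horizontally and using the obvious identity $HW(J_2)\circ HW(J_1)=HW(J_2\circ J_1)$ shows that $J_2\circ J_1$ witnesses $\mu_2\circ\mu_1$ as a morphism $X\to Z$. Similarly the identity of $\Gamma(X)$ is a morphism $X\to X$, witnessed by the identity functor of $\B$. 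With this in place, both $\nu\circ\mu_X$ and $\mu_Y$ are morphisms $U\to Y$, and the uniqueness clause of universality forces them to coincide as group maps. Evaluating at $g$ yields the required coherence. That $\Psi$ is a group homomorphism is then immediate since each $\mu_X$ is a group map and the product in $\pigr$ is pointwise.

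Finally I verify that $\Phi$ and $\Psi$ are mutually inverse. For $\Phi\circ\Psi=\mathrm{id}_{\Gamma(U)}$, the morphism $\mu_U:U\to U$ must equal the identity of $\Gamma(U)$, since the identity is a morphism $U\to U$ by the preliminary observation and uniqueness determines $\mu_U$; hence $\Phi(\Psi(g))=\mu_U(g)=g$. For $\Psi\circ\Phi=\mathrm{id}_{\pigr}$, given a graded coherent family $\gamma$, the coherence condition applied to the unique morphism $\mu_X:U\to X$ gives $\gamma_X=\mu_X(\gamma_U)$ for every connected grading $X$, so $\Psi(\Phi(\gamma))=\gamma$. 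The main obstacle is really the preliminary step that morphisms of gradings form a category with $U$ as initial object; once this is established, the theorem follows from the standard argument that an initial object in a category pins down every component of a coherent family.
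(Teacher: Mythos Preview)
Your proof is correct and follows the same approach as the paper: define the map $\Gamma(U)\to\pigr$ by $g\mapsto(\mu_X(g))_X$ and its inverse by $\gamma\mapsto\gamma_U$. The paper's proof simply asserts that the family $(\mu_X(g))_X$ is ``clearly graded coherent,'' whereas you supply the justification by checking that morphisms of gradings compose and that the identity is a morphism, making $U$ an initial object; this extra care is warranted and fills in what the paper leaves implicit.
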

\begin{proof}
For each connected grading $X$  let $\mu_X : U \to X$ be the unique morphism of gradings. For any $\delta\in\Gamma(U)$ we associate the family $\left(\mu_X(\delta)\right)_X$ where $X$ varies among the connected gradings. This family is clearly graded coherent and we obtain this way a group homomorphism $\Gamma(U)\to\pigr$. Its inverse associates to a family $\left(\gamma_X\right)_X$  the element $\gamma_U$. \qed
\end{proof}

For a grading $X$ of $\B$ we denote ${\mathsf{Fix}}(X)$ the subgroup of  $\Gamma(X)$ of elements $\sigma_X$ such that $\mu(\sigma_X)=\sigma_X$ for every  morphism of gradings $\mu:X\to X$.
\begin{pro}\label{fix}
Let $\B$ be a connected $k$-category with a given object $b_0$ and which admits a versal grading $V$. There is an injective group map
$$\pigr\to\mathsf{Fix} (V).$$
\end{pro}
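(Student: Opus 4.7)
The plan is to define the map by evaluation at the versal grading. Set
$$\Phi : \pigr \longrightarrow \Gamma(V), \qquad \Phi(\gamma) = \gamma_V,$$
where $\gamma = (\gamma_X)_X$ is a graded coherent family. Since the product in $\pigr$ is pointwise, $\Phi$ is automatically a group homomorphism, so the task reduces to showing that the image lies in $\mathsf{Fix}(V)$ and that $\Phi$ is injective.

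For the image: given any endomorphism of gradings $\mu : V \to V$, the coherence condition in Definition \ref{fgg} applied to the pair $X=Y=V$ with this $\mu$ gives $\mu(\gamma_V) = \gamma_V$. As this holds for every such $\mu$, we conclude $\gamma_V \in \mathsf{Fix}(V)$, so $\Phi$ factors through $\mathsf{Fix}(V)$.

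For injectivity this is where versality comes in. Suppose $\Phi(\gamma) = \gamma_V = 1$. Let $X$ be an arbitrary connected grading. By versality of $V$ there exists at least one morphism of gradings $\mu : V \to X$. Graded coherence of the family $\gamma$ gives $\mu(\gamma_V) = \gamma_X$, hence $\gamma_X = \mu(1) = 1$. Since $X$ was arbitrary among connected gradings, $\gamma$ is the trivial family and $\Phi$ is injective.

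There is essentially no hard step; the proof is a direct unpacking of the definitions. The only mildly subtle point is conceptual rather than technical: unlike in Theorem \ref{universal fundamental}, where a universal grading provides a \emph{unique} morphism $\mu_X$ that can be used to reconstruct the whole family from $\gamma_V$, in the versal case one only knows that some $\mu$ exists, so we cannot expect a surjection onto $\Gamma(V)$ or even onto $\mathsf{Fix}(V)$; we only recover injectivity. This asymmetry is exactly what forces the statement to be phrased as an injection rather than an isomorphism.
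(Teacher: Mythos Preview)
Your proof is correct and follows essentially the same approach as the paper: define the map by evaluation at $V$, use coherence with respect to endomorphisms of $V$ to land in $\mathsf{Fix}(V)$, and use versality together with coherence to obtain injectivity. The paper's proof is more terse but the argument is identical.
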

\begin{proof}
To an element $\gamma=\left(\gamma_X\right)_X$ in $\pigr$ we associate $\gamma_V\in\Gamma(V)$ which lies in $\mathsf{Fix} (V)$ since $\gamma$ is a coherent graded family. In case $\gamma_V=1$ consider for each connected grading $X$ a map of gradings $\mu:V\to X$ which exists since $V$ is versal. Since the family is graded coherent we infer that $\gamma_X=\mu(\gamma_V)= \mu(1)=1$.\qed
\end{proof}

\section{Intrinsic fundamental group}\label{fundamental intrinsic}

In order to prove that the fundamental grading group considered above is isomorphic to the intrinsic fundamental group that we have considered in  \cite{CRS ANT 10,CRS ART 12} we provide for the convenience of the reader a brief account of the main tools and results concerning this theory;  complete proofs and details can be found in the cited papers.

\subsection{Galois coverings}
\label{coverings}\sf The $b$-\textbf{star} of an object $b$ in a $k$-category $\B$ is the direct sum of the morphism spaces having  $b$ as a source or as  a target. Let $\C$ be a connected and non-empty $k$-category. A $k$-functor $F:\C\to\B$ is a \textbf{covering} if it is surjective on objects and if it induces $k$-isomorphisms between corresponding stars; more precisely  each  $b$-star in $\B$ is isomorphic - using  $F$ - to the $c$-star in $\C$  for any  $c\in\C_0$ such that $F(c)=b$. As a consequence a covering is always faithful. Note also that if a covering of $\B$ exists then $\B$ is connected.

\label{morphisms}\sf Let $F:\C\to\B$ and $G:\D\to\B$ be coverings of a $k$-category $\B$. A \textbf{morphism} from $F$ to $G$ is a pair of $k$-functors $(H,J)$ where $H:\C\to\D$ and $J:\B\to\B$ are such that :\\
- $J$ is an automorphism of $\B$ which is the identity on objects,\\
- $GH=JF$.

\begin{rem}
Morphisms of the form $(H,J)$ are called $J$-morphisms. Due to an observation of P. Le Meur -- see \cite{le,le1,le2} -- $\mathds{1}$-morphisms are not enough in order to insure that some coverings are isomorphic: indeed if $F$ is a covering and $J$ is an automorphism of $\B$ which is the identity on objects, $JF$ is a covering which is isomorphic to $F$ but in general not through a $\mathds{1}$-morphism.
\end{rem}

For a covering $F:\C\to\B$ let $\aut_1 F$ be the group of automorphisms of $\C$ such that $HF=F$. It acts on the $F$-fibre of each $b\in\B_0$, namely it acts on $F^{-1}(b)=\{c \in \C_0 \mid\ F(c)=b\}$. This action is free due to a result by P. Le Meur stating that two $J$-morphisms which coincide on some object are equal -- see \cite[2.9]{CRS ART 12}.

\begin{defi}
\label{Galois}
A covering is  \textbf{Galois} if the action of $\aut_1 F$ is transitive on some fibre, or equivalently, if the action is transitive on any fibre.
\end{defi}

A connected grading $X$ of $\B$ provides a Galois covering through a $k$-category called \textbf{smash product} $\B\#X$ defined as follows. The set of objects is the cartesian product $\B_0\times \Gamma(X)$ while the vector space of morphisms from $(b,s)$ to $(b',t)$ is $X^{t^{-1}s } {}_{b'}\B_b$ -- see \cite{CM, CRS ANT 10, CRS ART 12}. Actually it can be proven that any Galois covering $F$ is isomorphic to a smash product through a $\mathds{1}$-morphism which is not canonical, it depends on the choice of an object in each fibre of $F$. We consider $\gal^{\#}(\B,b_0)$ the full subcategory of the category of Galois coverings $\gal(\B,b_0)$ whose objects are coverings of type $F_X : \B\# X\to\B$.

Given a connected grading $X$ and the corresponding Galois covering $F_X$,  the structural group $\Gamma(X)$ is identified to $\aut_1 (F_X)$ as follows: the action on objects is given by left multiplication on the second component, and on morphisms is provided by the identical translation of the corresponding homogeneous components. A map of smash coverings $(H,J): F_X\to F_Y$ is given on objects by $H(b,s)=(b,H_b(s))$ and by $J$ on morphisms.

Following methods closely related to the way in which the fundamental group is considered in algebraic geometry after A. Grothendieck and C. Chevalley -- see for instance the book by R. Douady and A. Douady  [9] -- we have defined in \cite{CRS ANT 10, CRS ART 12} the following:
 \begin{defi}\label{fundamental}
Let $b_0$ be an object of a connected $k$-category $\B$ and consider the fibre functor $\Phi^{\#}$ which assigns to a smash product Galois covering $F_X$ its fibre $F_X^{-1}(b_0)$ in the category of sets. The \textbf{intrinsic fundamental group} $\Pi_1(\B,b_0)$ is $\aut \ \Phi^\#.$
\end{defi}

\subsection{Morphisms of smash products}
\label{lambda}\sf
 In order to have a concrete description of the elements of the previous intrinsic fundamental group, we first recall that we associate a unique group map  $$\lambda_{(H,J)}: \aut_1 F\to\aut_1 G$$ to a morphism of Galois coverings $(H,J) : F\to G$  such that $Hf=\lambda_{(H,J)}(f)H$ for every $f\in\aut_1F$.
Note the following facts:
\begin{itemize}
\item  $\lambda$ is functorial with respect to composition of morphisms of coverings, namely
$$\lambda_{(H'H,J'J)}= \lambda_{(H',J')}\lambda_{(H,J)}$$
\item
$\lambda_{(qH,J)}= q\lambda_{(H,J)} q^{-1}$ for $q\in\aut_1 F$.
\end{itemize}

Let $J$ be an automorphism of $\B$ which is the identity on objects and let $X$ and $Y$ be connected gradings of $\B$. In case there exists a $J$-morphism from $F_X$ to $F_Y$ we consider the \textbf{normalized} one verifying $N(b_0,1)=(b_0,1)$. Observe  that for any $(H,J)$ we have $N=H_{b_0}(1)^{-1} H$. We set  $\mu_J=\lambda_{(N,J)}$. According to the above formula for $\lambda_{(qH,J)}$ we note the following for any morphism $(H,J)$:

\begin{equation}\label{mu lambda}\tag{1}\mu_J=H_{b_0}(1)^{-1}\lambda_{(H,J)}H_{b_0}(1).
\end{equation}

\label{morphisms of smash}

More precisely
    $H_b(s)= \lambda_{(H,J)}(s)H_b(1)$
since
    $$ (b, H_b(s))=H(b,s)=Hs(b,1)=\lambda_{(H,J)}(s)H(b,1)=$$ $$\lambda_{(H,J)}(s)(b,H_b(1))= (b, \lambda_{(H,J)}(s)H_b(1)).$$

\label{coherent}
The following result is proven in detail in \cite[2.10]{CRS PAMS 12}. The fundamental group $\Pi_1(\B,b_0)$ is isomorphic to the group of  \textbf{coherent families}  $\sigma=(\sigma_X)_X$ where $X$ varies over all the connected gradings, $\sigma_X\in\Gamma(X)$ and $\mu_J(\sigma_X)=\sigma_Y$ for any $J$ such that there exists a $J$-morphism from $F_X$ to $F_Y$.

\subsection{Universal coverings}
\label{universal}\sf
 We consider pointed coverings of $(\B,b_0)$, namely coverings $F: (\C,c)\to(\B,b_0)$ where $c$ is an object such that $F(c)=b_0$.  In the previously cited papers a pointed Galois covering $U: (\U,u)\to(B,b_0)$ is called \textbf{universal} if for any Galois pointed covering $F:(\C,c)\to(\B,b_0)$ there exists a unique morphism $(H,\mathds{1})$ from $U$ to $F$ such that $U(u)=c$.

\begin{rem}
 When defining a universal  covering we require the existence of a unique pointed morphism $(H,\mathds{1})$. Instead, we could have required the existence of a pointed morphism $(H,J)$  without specifying the value of $J$. Doing so uniqueness is impossible to fulfill  since once a morphism exists any multiple of it by a non-zero scalar is still a morphism.
 \end{rem}

Universal coverings do not always exist -- see for instance \cite{CRS ART 12}. Nevertheless Schurian categories admit universal coverings -- see \cite{bu1,CRS DOC 11}.

As mentioned in the introduction, the proof in \cite[Proposition 4.3]{CRS ART 12} of the following result is incomplete:  for a $k$-category admitting a universal covering, its intrinsic fundamental group  should be isomorphic to the automorphism group of its universal covering. At the end of the proof of this result we obtain a family of elements which should provide an automorphism. This is correct only if the obtained family is coherent, which means that the elements correspond each other through the canonical group maps;  however this fact is not clear.

Instead it appears more natural to consider universal gradings. Indeed, Theorem \ref{universal fundamental} states that in case a universal grading exists its structural group is isomorphic to the fundamental grading group. In turn the latter is isomorphic to the intrinsic fundamental group according to the next section.

\section{Both fundamental groups are isomorphic}\label{comparison}

We will prove next that the two fundamental groups considered previously are isomorphic. We note that both of them consist of families of elements of the structural grading groups, where the families are respectively graded coherent and coherent. In Proposition \ref{same mu}  we will prove that the set of morphisms involved in the graded coherent and coherent requirements coincide. As an immediate consequence we infer  that the fundamental groups are isomorphic.

\begin{lem}\label{J is homogeneous}
Let $\B$ be a $k$-category, let $X$ and $Y$ be two connected  gradings and let $(H,J) :F_X\to F_Y$ be a morphism of the Galois coverings obtained through the smash product.
The automorphism $J$ is homogeneous from $X$ to $Y$  for any $X$-homogeneous morphism $f$ and we have that
$$\deg_Y J(f)=  H_{b'}(1)^{-1}\ \lambda_{(H,J)}(\deg_Xf)\ H_b(1).$$
\end{lem}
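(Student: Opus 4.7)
The plan is to unwind the definition of the smash product and use the fact that, at the level of underlying morphism spaces, both $F_X$ and $F_Y$ act as inclusions and $H$ must agree with $J$.

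First I would set $s = \deg_X f$ so that $f\in X^s{}_{b'}\B_b$. By the definition of the smash product, $f$ can be viewed as a morphism in $\B\#X$ from $(b,vs)$ to $(b',v)$ for any $v\in \Gamma(X)$, since the morphism space from $(b,vs)$ to $(b',v)$ is $X^{v^{-1}(vs)}{}_{b'}\B_b = X^s{}_{b'}\B_b$. Applying the $k$-functor $H:\B\#X\to\B\#Y$, the image $H(f)$ is a morphism in $\B\#Y$ from $H(b,vs)=(b,H_b(vs))$ to $H(b',v)=(b',H_{b'}(v))$, so it lies in the homogeneous component $Y^{H_{b'}(v)^{-1}H_b(vs)}{}_{b'}\B_b$.

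Next I would use the relation $F_YH = JF_X$. Since $F_X$ and $F_Y$ act as the identity on morphism spaces (they merely forget the group grading), this relation forces $H(f)=J(f)$ as elements of ${}_{b'}\B_b$. In particular $J(f)$ is $Y$-homogeneous of degree $H_{b'}(v)^{-1}H_b(vs)$, which already proves that $J$ is homogeneous from $X$ to $Y$.

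Finally, I would simplify this degree using the identity $H_b(s)=\lambda_{(H,J)}(s)H_b(1)$ recalled in the excerpt (with $\lambda=\lambda_{(H,J)}$ a group homomorphism). Then
$$H_{b'}(v)^{-1}H_b(vs) = H_{b'}(1)^{-1}\lambda(v)^{-1}\lambda(v)\lambda(s)H_b(1) = H_{b'}(1)^{-1}\lambda(\deg_X f)H_b(1),$$
which is independent of $v$ and gives the desired formula. The only subtlety worth mentioning is verifying that the degree we read off is well-defined, i.e.\ does not depend on the auxiliary choice of $v$; this is precisely what the cancellation of $\lambda(v)$ in the last display confirms, so there is no real obstacle.
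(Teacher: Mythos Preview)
Your proof is correct and follows essentially the same route as the paper's: lift the $X$-homogeneous morphism $f$ to $\B\#X$, apply $H$, and read off the $Y$-degree from the source and target in $\B\#Y$, using $H_b(s)=\lambda_{(H,J)}(s)H_b(1)$ and the relation $F_YH=JF_X$. The only cosmetic difference is that the paper picks the specific lift with source $(b,1)$ and target $(b',d^{-1})$, whereas you use a general lift indexed by $v$ and then observe the resulting degree is independent of $v$.
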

\begin{proof}
Let $f\in X^d {}_{b'}\B_b$. By definition of the smash product $f$ provides a morphism (still denoted $f$) from $(b,1)$ to $(b', d^{-1})$ lying  in the $F_X$-fibre of $f$.

Note that only homogeneous morphisms are lifted as a unique morphism, otherwise the pre-image of $f$ once a source object is fixed is a sum of morphisms according to the homogeneous components of $f$.

Since $J$ is the identity on objects,  $H(f)$ is a morphism in $\B\#Y$ from $(b, H_b(1))$ to $(b',H_{b'}(d^{-1}))= \left(b',\lambda_{(H,J)}(d^{-1})H_{b'}(1)\right)$. The morphism $F_Y(H(f))$ is $Y$-homogeneous from $b$ to $b'$ of degree \[\left(H_{b'}(1)\right)^{-1}\ \lambda_{(H,J)}(d)\ H_b(1).\]\qed

\end{proof}
\begin{pro}\label{deg mu}
Let $(H,J)$ be a morphism of Galois coverings from $F_X$ to $F_Y$ where $X$ and $Y$ are connected gradings of a  $k$-category $\B$. The group morphism $\mu_J: \Gamma(X)\to\Gamma(Y) $  is a morphism of gradings from $X$ to $Y$.
\end{pro}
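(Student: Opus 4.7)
The goal is to verify that $\mu_J$ satisfies Definition \ref{morphism of grading} with the homogeneous automorphism $J$ itself as witness. That $J$ is homogeneous from $X$ to $Y$ is already Lemma \ref{J is homogeneous}, so the map $HW(J):{}_{b_0}HW(\B,X)_{b_0}\to {}_{b_0}HW(\B,Y)_{b_0}$ is well defined. What remains is the commutativity of the square, i.e.\ the identity
$$\deg_Y HW(J)(w) \;=\; \mu_J(\deg_X w)$$
for every $X$-homogeneous closed walk $w$ at $b_0$. Since $\deg_X$ is surjective (the grading $X$ is connected) this also uniquely determines $\mu_J$ from $HW(J)$.

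The plan is a direct telescoping computation. Write $w=(f_n,\epsilon_n),\dots,(f_1,\epsilon_1)$ with $d_i=\deg_X f_i$, and denote by $s_i=\sigma(f_i,\epsilon_i)$ and $t_i=\tau(f_i,\epsilon_i)$ the virtual source and target; concatenation gives $t_i=s_{i+1}$, while closedness gives $s_1=t_n=b_0$. Lemma \ref{J is homogeneous} computes, for each $i$,
$$\deg_Y J(f_i) \;=\; H_{\tau(f_i)}(1)^{-1}\,\lambda_{(H,J)}(d_i)\,H_{\sigma(f_i)}(1),$$
and a short case analysis on the sign $\epsilon_i$ (which reverses source and target when $\epsilon_i=-1$) shows that in \emph{both} cases
$$\bigl(\deg_Y J(f_i)\bigr)^{\epsilon_i} \;=\; H_{t_i}(1)^{-1}\,\lambda_{(H,J)}(d_i)^{\epsilon_i}\,H_{s_i}(1).$$

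Multiplying these expressions in the order dictated by $\deg_Y$, the factors $H_{s_{i+1}}(1)H_{t_i}(1)^{-1}$ cancel at every concatenation point, leaving only the endpoint contributions $H_{b_0}(1)^{-1}$ and $H_{b_0}(1)$. Because $\lambda_{(H,J)}$ is a group homomorphism, the middle factors collapse to $\lambda_{(H,J)}(d_n^{\epsilon_n}\cdots d_1^{\epsilon_1}) = \lambda_{(H,J)}(\deg_X w)$. Thus
$$\deg_Y HW(J)(w) \;=\; H_{b_0}(1)^{-1}\,\lambda_{(H,J)}(\deg_X w)\,H_{b_0}(1) \;=\; \mu_J(\deg_X w),$$
the last equality being the formula (1) defining $\mu_J$. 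This is exactly the commutativity required.

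The main thing to be careful about is the bookkeeping of virtual sources and targets when $\epsilon_i=-1$: the exponent $\epsilon_i$ has to transfer consistently from $d_i$ to the conjugating factors so that the telescoping actually happens. Once that case check is in place, the rest is just the homomorphism property of $\lambda_{(H,J)}$ and formula (1); the statement that the diagram in Definition \ref{morphism of grading} commutes follows immediately.
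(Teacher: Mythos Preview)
Your proof is correct and follows essentially the same route as the paper: apply Lemma \ref{J is homogeneous} to each virtual morphism, handle the sign $\epsilon_i$ so that the conjugating factors line up with the virtual endpoints, then telescope and invoke formula \eqref{mu lambda}. The paper phrases the middle step as an induction with four cases on $(\epsilon_n,\epsilon_{n+1})$ rather than your single-factor case check plus global telescoping, but the content is identical.
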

\begin{proof}
Let $w$ be a homogeneous closed walk at $b_0$. We are going to prove that
$$\deg_YJ(w)= \mu_J \left(\deg_X w\right).$$
If $f$ is an $X$-homogeneous morphism then
$$\deg_Y J(f)=  H_{b'}(1)^{-1}\ \lambda_{(H,J)}(\deg_Xf)\ H_b(1).$$
We assert that this formula also holds for a homogeneous walk
$$w= \left(f_{n+1},\epsilon_{n+1}\right), \dots ,\left(f_1,\epsilon_1\right)$$
from $b_1$ to $b_{n+2}$. Indeed, the formula is verified  by induction by inspecting the four cases according to the values of $\epsilon_n$ and $\epsilon_{n+1}$.

In particular if the homogeneous walk is closed at $b_0$ we obtain the required formula using the relation between $\mu_J$ and $\lambda_{(H,J)}$ according to  \eqref{mu lambda} in \ref{lambda}\qed

\end{proof}

\begin{pro}\label{same mu}
Let $\B$ be a connected $k$-category with a base object $b_0$ and let $X$ and $Y$ be two connected  gradings. For any morphism of gradings $\mu:X\to Y$  there exists a morphism of Galois coverings $(H,J) :F_X\to F_Y$ such that $\mu_{J} = \mu$.

\end{pro}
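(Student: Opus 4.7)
The plan is to construct the normalized functor $H : \B \# X \to \B \# Y$ by transporting data along $X$-homogeneous walks from the base object. The hypothesis provides a homogeneous automorphism $J$ witnessing $\mu$, and I will build $H$ compatible with $J$ so that $H_{b_0}(1) = 1$ and $\lambda_{(H,J)} = \mu$; formula \eqref{mu lambda} of \ref{lambda} will then yield $\mu_J = \mu$. As a preliminary observation, the $k$-linearity of $J$ combined with its homogeneity forces $J(X^s{}_{b'}\B_b)$ to lie in a single $Y$-homogeneous component of ${}_{b'}\B_b$, since otherwise a generic sum in the image would have two distinct nonzero $Y$-homogeneous summands and fail to be homogeneous; in particular $\deg_Y J(f)$ depends only on $(b, b', \deg_X f)$.

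For each $b \in \B_0$, fix an $X$-homogeneous walk $w_b$ from $b_0$ to $b$ (empty for $b = b_0$), which exists because $\B$ is connected and any non-zero morphism decomposes into non-zero homogeneous components. Set
\[
H_b(1) := \mu(\deg_X w_b) \cdot \deg_Y\bigl(HW(J)(w_b)\bigr)^{-1}, \qquad H_b(s) := \mu(s)\, H_b(1),
\]
and $H(f) := J(f)$ on morphisms. The crucial point is that $H_b(1)$ does not depend on the choice of $w_b$: given another such walk $w'$, the concatenation $(w')^{-1} w_b$ is a closed $X$-homogeneous walk at $b_0$, and the commutative diagram defining $\mu$ as a morphism of gradings (Definition \ref{morphism of grading}) gives $\mu(\deg_X((w')^{-1} w_b)) = \deg_Y(HW(J)((w')^{-1} w_b))$. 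Expanding via the multiplicativity properties of Remark \ref{image is a group} and the group-map property of $\mu$ converts this identity into the desired equality of the two candidate values of $H_b(1)$.

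It remains to verify that $H$ is a $k$-functor $\B \# X \to \B \# Y$ with $F_Y H = J F_X$ and that $\lambda_{(H,J)} = \mu$. For $f \in X^{t^{-1}s} {}_{b'}\B_b$, viewed as a morphism $(b,s) \to (b',t)$ in $\B \# X$, I compute $H_{b'}(1)$ using the walk $(f,1)\, w_b$ from $b_0$ to $b'$; this yields $H_{b'}(1) = \mu(t^{-1}s)\, H_b(1)\, \deg_Y J(f)^{-1}$, which rearranges to $\deg_Y J(f) = H_{b'}(t)^{-1} H_b(s)$, placing $H(f) = J(f)$ in the hom-space from $(b, H_b(s))$ to $(b', H_{b'}(t))$ in $\B \# Y$. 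Functoriality of $H$ and the equation $F_Y H = J F_X$ follow directly from the corresponding properties of $J$. Comparing $H_b(s) = \mu(s)\, H_b(1)$ with the formula $H_b(s) = \lambda_{(H,J)}(s) H_b(1)$ recalled in \ref{morphisms of smash} identifies $\lambda_{(H,J)} = \mu$, and the normalization $H_{b_0}(1) = 1$ then gives $\mu_J = \mu$ via \eqref{mu lambda}. The main obstacle is the well-definedness of $H_b(1)$, since this is the single point at which the hypothesis that $\mu$ is a morphism of gradings is genuinely required; everything else is a direct bookkeeping from Lemma \ref{J is homogeneous}.
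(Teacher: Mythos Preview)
Your proof is correct and follows essentially the same route as the paper: choose $X$-homogeneous walks from $b_0$ to each object, define $H$ on objects by $H(b,s)=(b,\mu(s)h_b)$ with $h_b$ built from the $Y$-degree of $J$ applied to the chosen walk, put $H=J$ on morphisms, and check this lands in the right homogeneous component of $\B\#Y$. The only cosmetic differences are that the paper normalizes its walks to have $\deg_X v_b=1$ (so no well-definedness argument is needed and $h_b=\bigl(\deg_Y J(v_b)\bigr)^{-1}$ directly), and it closes by invoking Proposition~\ref{deg mu} rather than reading off $\lambda_{(H,J)}=\mu$ from the object formula as you do.
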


\begin{proof}
Since $X$ is connected we can choose a family $(v_b)_{b\in\B_0}$ where $v_b\in {}_{b}HW(\B, X)_{b_0}$ and $\deg_X v_b=1$.
For an $X$-homogeneous morphism $f$ let $${}^v\!f ={v}_{\tau(f)}^{-1} f {v}_{\sigma(f)}.$$
The definition of a morphism of gradings $\mu:X\to Y$
includes the existence of a homogeneous automorphism $J$ from $X$ to $Y$ such that for any closed homogeneous walk $w$,
\[ \deg_YJ(w)= \mu\left(\deg_Xw \right). \]
In particular
$$\deg_Y J({}^v\!f)=\mu\left(\deg_X({}^v\!f)\right)=  \mu\left( \deg_X  {v}_{\tau(f)}^{-1} \ \deg_Xf\  \deg_X {v}_{\sigma(f)}\right)
= \mu\left(\deg_X f \right).$$
On the other hand
$$\deg_Y J({}^v\!f)= \deg_Y\  J\left({v}_{\tau(f)}^{-1} \ f \ {v}_{\sigma(f)}\right)=
\deg_Y J\left({v}_{\tau(f)}^{-1}\right)  \deg_Y J(f) \deg_Y J\left({v}_{\sigma(f)}\right).$$
We set $h_b=\left(\deg_Y J\left({v}_{b}\right)\right)^{-1}.$ Then
\begin{equation}
\label{degxy}\tag{2}
\mu\left(\deg_X  f  \right)= h_{\tau(f)} \    \deg_Y J (f) \  h_{\sigma(f)}^{-1}.
\end{equation}
We define $H:\B\#X \to \B\#Y$ on objects by $H(b,s)=(b,\mu(s)h_b)$. In order to have a $J$-morphism of smash products let $f\in{}_{(b',t)}\B\# X_{(b,s)}$, that is a morphism  $f\in X^{t^{-1}s} {}_{b'}\B_b$. We put $H(f)=J(f)$. We only have to check that $H(f)$  can be viewed as a morphism from $(b,\mu(s)h_b)$ to $(b',\mu(t)h_b')$. This is the case if and only if
$$  \deg_Y J(f) = [\mu(t)h_b']^{-1}\mu(s)h_b$$
and this equality is verified using \eqref{degxy} as follows:
$$   \deg_Y J(f) =  h_{b'} ^{-1}\ \mu\left(\deg_X(f)\right) \  h_{b} =  h_{b'}^{-1}\  \mu\left(t^{-1}s\right)\  h_{b}.$$
This morphism of smash products gives rise to a morphism of groups $\mu_J$.
According to the previous proposition $\mu_J:X\to Y$ is in turn also a morphism of gradings associated to $J$. Then $$\mu_J \left(\deg_X w\right)=\mu \left(\deg_X w\right).$$
Since $\deg_X$ is surjective we infer $\mu=\mu_J$.
\qed
\end{proof}

\begin{rem}\label{K}

Different morphisms of coverings can provide the same morphism of gradings as the following examples show.
Let $\K$ be the Kronecker category, that is the $k$-category with two objects $x$ and $y$ whose endomorphism algebras are $k$, with
no non-zero morphisms from $y$ to $x$, while ${}_y\K_x$ is two-dimensional. Note that once a basis $\{\alpha,\beta\}$ of  ${}_y\K_x$ is chosen the category   can also be described as the linear envelope of the path category of the quiver
\def\dar[#1]{\ar@<2pt>^\alpha[#1]\ar@<-2pt>_\beta[#1]}
  \entrymodifiers={!!<0pt,0.7ex>+}
$$\raisebox{.7ex}{\xymatrix{ {}_x\cdot\dar[r] & \cdot_y }}$$
Let $X$ be the grading with infinite cyclic structural group $\Gamma(X)$ generated by $t$ and given by $X(\alpha)=t$ and $X(\beta)=1$. Note that this grading is connected. Let $Y$ be the quotient grading of $X$ with  structural group the cyclic group of order $2$ and let $\mu:\Gamma(X)\to \Gamma(Y)$ be the quotient map of groups. Let $J$ be the automorphism of $\K$ which interchanges $\alpha$ and $\beta$. It is easy to verify that there exists a functor $H:  \K\#X\to\K\#Y$ such that $(H,J):F_X\to F_Y$ is a morphism of the Galois coverings; it provides $\mu$ as  morphism of gradings. Moreover  the morphism of coverings obtained with the identity automorphism of $\K$ also provides $\mu$ as a morphism of gradings.

Another example is provided by the set of automorphisms  $J_{p,q}$ of $X$ where $p$ and $q$ are non-zero scalars,    $J_{p,q}(\alpha)=p\alpha$ and $J_{p,q}(\beta)=q\beta$. All the  corresponding automorphisms $\left(H_{p,q}, J_{p,q}\right)$ of $F_X$ provide the identity as morphism of gradings.
\end{rem}

\section{Schurian generated categories}\label{sgc}

Let $\B$ be a $k$-category.  By definition, a non-zero morphism from an object $b$ to an object $b'$ is called \textbf{Schurian} if the space of all morphisms in the category from $b$ to $b'$ is one-dimensional.  The \textbf{Schurian generated subcategory} of $\B$ is
the intersection of all the $k$-subcategories of $\B$ containing the Schurian morphisms.
Its morphisms are  the sums of compositions of Schurian morphisms.
A $k$-category is called \textbf{Schurian generated} (\textbf{SG} for short) if it coincides with its Schurian generated subcategory.

Our main purpose in this section is to prove that a connected SG-category $\B$ admits a universal grading; actually  first we will prove that $\B$ admits  a connected grading by its  fundamental group.

Recall that a \textbf{Schurian category} is a $k$-category such that all the morphism spaces are one-dimensional - see \cite{bu1,CRS DOC 11}; a Schurian category is clearly Schurian generated. From the cited papers we know that a Schurian category admits a universal covering.

Other examples of SG-categories are provided by constricted categories $\B$ as follows. Recall that a presented $k$-category by a quiver $Q$ with  relations is a category of the form $kQ/I$  where $kQ$ is the linearization of the free category determined by $Q$ and $I$ is a two-sided  ideal contained in the square of the two-sided ideal generated by the arrows. Note that isomorphic categories may admit different presentations. It is easy to see that the arrows of the quiver generate the presented category. A presented category $kQ/I$ is called \textbf{constricted}  --  see \cite{bama} -- in case for each arrow $a$ any strictly parallel path (i.e. any path in $Q$ different from $a$ but sharing the same source and target with $a$) is zero in the quotient. Clearly this insures that arrows of $Q$ are Schurian morphisms, hence the presented category is Schurian generated.

\begin{defi}
Let $\B$ be an SG-category.
A \textbf{Schurian generated morphism} (\textbf{SG-morphism} for short) of $\B$ is a morphism which is a non-zero composition of Schurian morphisms; a \textbf{virtual SG-morphism} is a virtual morphism  where the morphism involved is an SG-morphism. An \textbf{SG-walk} is a walk made of  virtual SG-morphisms.
\end{defi}

\begin{lem} \label{SG homogeneous}
Let $\B$ be an SG-category.
\begin{enumerate}
\item
SG-walks are homogeneous with respect to any grading.
\item
Let $J$ be an automorphism which is the identity on objects,  $f$ an SG-morphism and  $X$ a grading. Then $J(f)$ is an SG-morphism and $\deg_X J(f) = \deg_X f$.
\end{enumerate}
\end{lem}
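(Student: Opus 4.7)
The whole argument reduces to the following elementary observation: any non-zero vector in a one-dimensional morphism space is $X$-homogeneous for every grading $X$, since the decomposition ${}_{b'}\!\B_b=\bigoplus_s X^s{}_{b'}\!\B_b$ forces exactly one summand to be the whole (one-dimensional) space. I would handle (1) and (2) in turn starting from this remark.

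For (1), every Schurian morphism is thus $X$-homogeneous, and by the compatibility of composition with the grading, any non-zero composition of Schurian morphisms is $X$-homogeneous with degree equal to the corresponding product of the degrees of the factors. Therefore every SG-morphism is homogeneous with respect to every grading $X$; consequently every virtual SG-morphism is a virtual homogeneous morphism, and every SG-walk is a homogeneous walk.

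For (2), the key point is that $J$, being a $k$-linear functor which is the identity on objects, restricts to a $k$-linear automorphism of each morphism space ${}_{b'}\!\B_b$. When this space is one-dimensional, such a restriction is scalar multiplication by some non-zero $\alpha\in k$, so for a Schurian $f\in{}_{b'}\!\B_b$ one has $J(f)=\alpha f$ with $\alpha\neq 0$: thus $J(f)$ is again Schurian and lies in the same one-dimensional homogeneous component as $f$, giving $\deg_X J(f)=\deg_X f$. For a general SG-morphism $f=g_n\cdots g_1$ written as a non-zero composition of Schurian morphisms, functoriality of $J$ yields $J(f)=J(g_n)\cdots J(g_1)$, which is again a non-zero composition of Schurian morphisms and therefore an SG-morphism; its $X$-degree is the product of the $\deg_X J(g_i)$, which by the Schurian case equals the product of the $\deg_X g_i$, i.e.\ $\deg_X f$. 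There is no real obstacle here; the only subtlety to keep straight is that $J$ need not fix Schurian morphisms pointwise but only up to a non-zero scalar, which is harmless since these scalars are absorbed inside a single homogeneous component.
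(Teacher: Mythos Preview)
Your argument is correct and follows the same route as the paper's proof: Schurian morphisms are homogeneous because one-dimensional spaces cannot split nontrivially, SG-morphisms inherit homogeneity by compatibility of composition, and $J$ acts on each one-dimensional morphism space by a non-zero scalar so that $J(f)$ stays in the same homogeneous component. The paper's version is terser (it does not spell out the scalar-multiplication point or the functoriality step), but the substance is identical.
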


\begin{proof}
Let $X$ be a grading. A Schurian morphism from an object $b$ to an object $b'$ is clearly homogeneous since $_{b'}\B_{b}$ is one-dimensional.
An SG-morphism is a composition of Schurian morphisms, then it is homogeneous of degree the product of the degrees.

Moreover if $J$ is an automorphism which is the identity on objects and $f$ is a Schurian morphism, then $J(f)$ is homogeneous of the same degree. This also holds for SG-morphisms.
\qed
\end{proof}

\begin{rem}
Observe that assigning a group element to each one-dimensional space of Schurian morphisms does not  always produce a grading of the entire category. Indeed a given morphism may  be written  in several ways as a sum of compositions of SG-morphisms.
\end{rem}

\begin{lem}\label{SG connexe}
Let $\B$ be a connected SG-category. Between any two objects there is at least one SG-walk.
\end{lem}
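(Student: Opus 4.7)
The plan is to bootstrap from the connectedness hypothesis on $\B$: an ordinary walk exists between any two objects, and each of its virtual constituents can be replaced by a virtual SG-morphism with the same source and target. Concatenating the replacements then produces an SG-walk.

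More precisely, fix $b,b'\in\B_0$. Since $\B$ is connected, the graph of non-zero morphisms is connected, hence there is a walk
\[
w=(f_n,\epsilon_n),\dots,(f_1,\epsilon_1)
\]
from $b$ to $b'$ in which every $f_i$ is a non-zero morphism of $\B$. The task reduces to producing, for each index $i$, an SG-morphism $g_i$ with $\sigma(g_i)=\sigma(f_i)$ and $\tau(g_i)=\tau(f_i)$: the sequence $(g_n,\epsilon_n),\dots,(g_1,\epsilon_1)$ will then be a legitimate concatenation (the source/target data of each virtual morphism is unchanged), and by definition it will be an SG-walk from $b$ to $b'$.

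The construction of $g_i$ is where the SG hypothesis enters. Because $\B$ coincides with its Schurian generated subcategory, every morphism in $\B$, and in particular $f_i\in {}_{\tau(f_i)}\!\B_{\sigma(f_i)}$, can be expressed as a finite sum
\[
f_i=\sum_j g_{ij}
\]
where each $g_{ij}$ is a composition of Schurian morphisms and shares source $\sigma(f_i)$ and target $\tau(f_i)$ with $f_i$. Since $f_i\neq 0$, at least one summand $g_{ij_0}$ is non-zero; being a non-zero composition of Schurian morphisms, it is by definition an SG-morphism. Setting $g_i=g_{ij_0}$ yields the desired virtual SG-morphism $(g_i,\epsilon_i)$, and the argument is complete.

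I do not foresee any real obstacle here; the only thing to be careful about is verifying that the chosen $g_i$ really has the same source and target as $f_i$ (so that the $(g_i,\epsilon_i)$ still concatenate correctly), which is built into the very definition of the Schurian generated subcategory.
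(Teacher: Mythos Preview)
Your argument is correct and follows essentially the same route as the paper: start from an ordinary walk guaranteed by connectedness, express each constituent non-zero morphism as a sum of compositions of Schurian morphisms, and replace it by a single non-zero summand to obtain an SG-walk. If anything, your version is slightly more careful in singling out a non-zero summand explicitly.
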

\begin{proof}
Since $\B$ is connected between any two objects there exists a walk $w$. The first component of each virtual morphism of $w$ is a sum of SG-morphisms. Replacing this sum by one of its summands and performing this  for each virtual morphism provides a new walk $w'$ which is an SG-walk. \qed
\end{proof}

\begin{lem}\label{homogeneous morphisms in a SG}
Let $\B$ be an SG-category with a grading $X$. Each $X$-homogeneous morphism is a sum of SG-morphisms of same $X$-degree.
\end{lem}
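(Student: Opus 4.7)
The plan is to combine the SG-category assumption with the homogeneity of SG-morphisms established in Lemma \ref{SG homogeneous}, and then exploit the uniqueness of the direct sum decomposition defining the grading.

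First, I would take any $X$-homogeneous morphism $f\in X^s{}_{b'}\!\B_b$. Since $\B$ is Schurian generated, the space ${}_{b'}\!\B_b$ is spanned by SG-morphisms from $b$ to $b'$, so we may write $f=\sum_{i} f_i$ where each $f_i$ is an SG-morphism in ${}_{b'}\!\B_b$. By Lemma \ref{SG homogeneous}(1), each $f_i$ is $X$-homogeneous of some degree $s_i\in\Gamma(X)$.

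Next, I would regroup the summands according to their $X$-degree. For each $t\in\Gamma(X)$, set
\[
g_t=\sum_{i:\, s_i=t} f_i\ \in\ X^t{}_{b'}\!\B_b,
\]
so that $f=\sum_{t\in\Gamma(X)} g_t$ is a (finite) decomposition of $f$ with $g_t\in X^t{}_{b'}\!\B_b$ for each $t$.

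The key step is then to invoke uniqueness of the direct sum decomposition ${}_{b'}\!\B_b=\bigoplus_{t\in\Gamma(X)} X^t{}_{b'}\!\B_b$: since $f$ already lies in the single homogeneous component $X^s{}_{b'}\!\B_b$, we must have $g_t=0$ for $t\neq s$ and $g_s=f$. Therefore $f=g_s=\sum_{i:\, s_i=s} f_i$, which is a sum of SG-morphisms all of $X$-degree $s$, as required.

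I do not foresee any real obstacle here: the argument is essentially a bookkeeping exercise that combines the generation hypothesis with the uniqueness of homogeneous decompositions. The only point to be slightly careful about is that the regrouping is a well-defined finite sum (which is automatic since the initial expression $f=\sum f_i$ is finite), and that the terms with $s_i=s$ are genuinely SG-morphisms of degree $s$ (not just homogeneous morphisms), which is given directly by Lemma \ref{SG homogeneous}(1).
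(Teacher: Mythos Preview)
Your argument is correct and follows essentially the same route as the paper: write $f$ as a sum of SG-morphisms, regroup them by $X$-degree, and use the uniqueness of the homogeneous decomposition to conclude that only the degree-$s$ part survives. There is nothing to add.
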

\begin{proof}
Let $f$ be a non-zero morphism. Since $\B$ is Schurian generated  $f=\sum f_i$ where each $f_i$ is an SG-morphism, hence  homogeneous for any grading.  We write  $f=\sum g_j$ where each $g_j$ is non-zero and is the sum of all the $f_i$'s having  same $X$-degree. The $g_i$'s are $X$-homogeneous of different degrees, which means that they are the $X$-homogeneous components of $f$. Assume now $f$ is $X$-homogeneous. Then  the sum of the $g_j$'s is reduced to one summand, that is  $f$ is a sum of SG-morphisms of same $X$-degree.
\qed
\end{proof}

\begin{pro}\label{constant}
Let $\B$ be an SG-category with connected gradings $X$ and $Y.$ If there exists an automorphism  $J$  of $\B$
homogeneous from $X$ to $Y,$ then the identity of $\B$ is also homogeneous from $X$ to $Y$ and
\[\deg_Y \circ HW(J)= \deg_Y \circ HW(\mathds{1}). \]

\end{pro}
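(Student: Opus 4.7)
The plan is to show that any $X$-homogeneous morphism $f$ is automatically $Y$-homogeneous, of $Y$-degree equal to $\deg_Y J(f)$. Both conclusions of the proposition follow at once from this statement applied walk-by-walk.

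First, I would decompose $f$. By Lemma \ref{homogeneous morphisms in a SG}, write $f=\sum_i f_i$ with the $f_i$ SG-morphisms of the same $X$-degree $d$. By Lemma \ref{SG homogeneous}(1) each $f_i$ is also $Y$-homogeneous, say of $Y$-degree $e_i$. The task reduces to proving that all $e_i$ coincide.

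Next I would apply $J$. Since $J$ is homogeneous from $X$ to $Y$ and $f$ is $X$-homogeneous (of degree $d$), $J(f)$ is $Y$-homogeneous, so all its $Y$-homogeneous components have the same $Y$-degree. But $J(f)=\sum_i J(f_i)$, and by Lemma \ref{SG homogeneous}(2) applied to the grading $Y$, each $J(f_i)$ is an SG-morphism with $\deg_Y J(f_i)=\deg_Y f_i=e_i$. Hence $J(f)=\sum_i J(f_i)$ is a sum of $Y$-homogeneous SG-morphisms, and since this sum is itself $Y$-homogeneous all the $e_i$ must be equal to a common value $e$. Consequently $f=\sum_i f_i$ is $Y$-homogeneous of degree $e$, and $\deg_Y \mathds{1}(f)=e=\deg_Y J(f)$.

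Finally, extend from morphisms to walks: given an $X$-homogeneous walk $w=(f_n,\epsilon_n),\dots,(f_1,\epsilon_1)$, the identity $\mathds{1}$ sends each $f_i$ to itself — which is now $Y$-homogeneous by the previous step — so $HW(\mathds{1})(w)$ is well-defined as a $Y$-homogeneous walk, and its $Y$-degree agrees factor-by-factor with that of $HW(J)(w)$. This gives the required equality $\deg_Y\circ HW(J)=\deg_Y\circ HW(\mathds{1})$.

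The only subtlety I anticipate is the potential non-uniqueness of the SG-decomposition $f=\sum f_i$: different decompositions might a priori mix SG-morphisms of various $Y$-degrees. The argument circumvents this by using the specific decomposition produced by Lemma \ref{homogeneous morphisms in a SG} together with the homogeneity of $J(f)$ to force all the $Y$-degrees in that decomposition to coincide; the existence of $J$ is exactly what makes this rigidity step work.
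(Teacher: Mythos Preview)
Your overall strategy matches the paper's, but the inference ``since this sum is itself $Y$-homogeneous all the $e_i$ must be equal'' is not valid as written. A $Y$-homogeneous element can certainly be expressed as a sum of $Y$-homogeneous elements of \emph{different} degrees, provided the off-degree contributions cancel. Concretely, if among your $J(f_i)$ there is a subfamily with common $Y$-degree $e'\neq e$ whose sum happens to be zero, then $J(f)$ is still $Y$-homogeneous of degree $e$ while some $e_i$ equal $e'$. Nothing in Lemma~\ref{SG homogeneous}(2) rules this out. The subtlety you flag at the end (non-uniqueness of the SG-decomposition) is not the real danger; cancellation \emph{within} your chosen decomposition is.

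The paper closes this gap by choosing the SG-decomposition $f=\sum f_i$ of \emph{minimal length}, so that no proper subsum vanishes, and then observes that $J(f_i)=\lambda_i f_i$ for nonzero scalars $\lambda_i$ (since an automorphism fixing objects must act by scalars on one-dimensional Schurian hom-spaces, hence on their composites). Injectivity of $J$ then transports the ``no subsum vanishes'' property to $\sum J(f_i)$, and \emph{now} $Y$-homogeneity of $J(f)$ genuinely forces all $e_i$ equal. An alternative repair, closer to your write-up, is to group $\sum_i J(f_i)$ by $Y$-degree: for each $e'\neq \deg_Y J(f)$ one gets $\sum_{e_i=e'} J(f_i)=0$, hence $\sum_{e_i=e'} f_i=0$ by injectivity of $J$, so those terms already contribute nothing to $f$ and $f$ is $Y$-homogeneous of degree $\deg_Y J(f)$ --- which is all you actually need.
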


\begin{proof}
It suffices to prove the equality by evaluating  in an $X$-homogeneous morphism $f$. By the previous lemma
$f=\sum f_i$ where the $f_i$'s are SG-morphisms of same $X$-degree. We can assume that this expression is of minimal
length, in particular no subsum is zero. Now $J(f) = \sum J(f_i) = \sum \lambda_i f_i$, where the $\lambda_i$'s are
non-zero elements of $k$.
Note that in this expression of $J(f)$, no subsum is zero since $J$ is an automorphism.
Recall that $J(f)$ is $Y$-homogeneous.  Moreover  $\lambda_i f_i$ is an SG-morphism for all $i$ so it is $Y$-homogeneous as well.
Hence all the $f_i$'s have same $Y$-degree and $\deg_Y J(f)=\deg_Y f$.\qed
\end{proof}

\begin{cor}\label{unique}
Let $\B$ be an SG-category with connected gradings $X$ and $Y.$ There is at most one morphism of gradings from $X$ to $Y.$
\end{cor}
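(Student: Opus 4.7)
The plan is to show directly that any two morphisms of gradings $\mu_1,\mu_2\colon X\to Y$ must coincide, using Proposition \ref{constant} to collapse the dependence on the chosen homogeneous automorphism functor. First, suppose $\mu_1$ and $\mu_2$ are both morphisms of gradings from $X$ to $Y$, witnessed respectively by homogeneous automorphism functors $J_1$ and $J_2$ making the commutative square of Definition \ref{morphism of grading} commute with base object $b_0$.

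The key observation is that Proposition \ref{constant} applies to both $J_1$ and $J_2$ since $\B$ is an SG-category, giving
\[
\deg_Y \circ HW(J_1) \;=\; \deg_Y \circ HW(\mathds{1}) \;=\; \deg_Y \circ HW(J_2)
\]
as maps ${}_{b_0}HW(\B,X)_{b_0}\to \Gamma(Y)$. Combining this with the commutativity of the square for each $\mu_i$, I would conclude that for every closed $X$-homogeneous walk $w$ at $b_0$,
\[
\mu_1(\deg_X w) \;=\; \deg_Y HW(J_1)(w) \;=\; \deg_Y HW(J_2)(w) \;=\; \mu_2(\deg_X w).
\]

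Finally, since $X$ is connected, the degree map $\deg_X\colon {}_{b_0}HW(\B,X)_{b_0}\to \Gamma(X)$ is surjective, so $\mu_1$ and $\mu_2$ agree on all of $\Gamma(X)$ and are therefore equal. There is no serious obstacle here: the work was done in Proposition \ref{constant}, which reduces the degree computation for any homogeneous $J$ to that of the identity, and the rest is a one-line diagram chase combined with surjectivity of the degree map for connected gradings.
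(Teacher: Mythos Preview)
Your proof is correct and essentially identical to the paper's own argument: both invoke Proposition \ref{constant} to obtain $\deg_Y\circ HW(J_1)=\deg_Y\circ HW(\mathds{1})=\deg_Y\circ HW(J_2)$, deduce $\mu_1\,\deg_X=\mu_2\,\deg_X$ from the defining commutative squares, and conclude via surjectivity of $\deg_X$ for the connected grading $X$.
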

\begin{proof}
Let $\mu$ and $\mu'$ be morphisms corresponding respectively to automorphisms $J$ and $J'$ which are homogeneous from
$X$ to $Y$. By the previous result
\[\deg_Y \circ HW(J)= \deg_Y \circ HW(\mathds{1}) = \deg_Y \circ HW(J'), \]
hence $\mu \  \deg_X=\mu' \ \deg_X$. Since $\deg_X$ is surjective we obtain $\mu=\mu'$.\qed

\end{proof}

Let $\B$ be an SG-category with base object  $b_0$. Using Lemma \ref{SG connexe} we choose a family $v=(v_b)_{b\in\B_0}$ of SG-walks (called \textbf{connectors}) where $v_b$ goes from $b_0$ to $b$ and with the special choice of ${v}_{b_0}$ being the identity endomorphism at $b_0$. As in the proof of Proposition \ref{same mu}, given a morphism $f$ we set
 $${}^v\!f ={v}_{\tau(f)}^{-1} f {v}_{\sigma(f)}$$
 which is a closed walk at $b_0$. Observe that if $f$ is an SG-morphism then ${}^v\!f$ is a closed SG-walk.

\begin{pro}
Let $\B$ be a connected SG-category with base object $b_0$. There is a grading $P$ of $\B$ with structural group $\pigr$.
\end{pro}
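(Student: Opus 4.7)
The plan is to define the grading $P$ by first assigning to each SG-morphism $f$ a canonical element $P(f)\in\pigr$, and then declaring $P^\gamma{}_{b'}\B_b$ to be the $k$-span of those SG-morphisms from $b$ to $b'$ having $P$-degree $\gamma$. Since by Lemma \ref{homogeneous morphisms in a SG} every morphism of $\B$ is a sum of SG-morphisms, these subspaces will span ${}_{b'}\B_b$, so the substantive tasks are to verify graded coherence of the family $P(f)$, directness of the decomposition, and compatibility with composition.

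For an SG-morphism $f$, the closed walk ${}^v\!f=v_{\tau(f)}^{-1} f v_{\sigma(f)}$ at $b_0$ is a closed SG-walk, hence $X$-homogeneous for every connected grading $X$ by Lemma \ref{SG homogeneous}. I set $P(f)=(\deg_X {}^v\!f)_X\in\prod_X\Gamma(X)$. The main obstacle is to prove that this family is graded coherent, because an automorphism $J$ realizing a given morphism of gradings $\mu:X\to Y$ is only constrained through $\deg_Y\circ HW(J)$ on closed homogeneous walks at $b_0$, and a priori nothing pins down the value of $\deg_Y J({}^v\!f)$. Here the SG hypothesis is decisive: Definition \ref{morphism of grading} gives $\mu(\deg_X {}^v\!f)=\deg_Y J({}^v\!f)$, and Proposition \ref{constant} applied to the $X$-homogeneous closed walk ${}^v\!f$ yields $\deg_Y J({}^v\!f)=\deg_Y {}^v\!f$. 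Thus $\mu(P(f)_X)=P(f)_Y$, and $P(f)\in\pigr$.

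For directness of $\bigoplus_\gamma P^\gamma{}_{b'}\B_b$, observe that every SG-morphism from $b$ to $b'$ of $P$-degree $\gamma$ has $X$-degree $\deg_X v_{b'}\cdot\gamma_X\cdot(\deg_X v_b)^{-1}$, which depends on $\gamma$ only through $\gamma_X$; hence SG-morphisms of a given $P$-degree all lie in one fixed $X$-homogeneous component. If $\sum_{i=1}^n f_i=0$ with $f_i\in P^{\gamma_i}$ and the $\gamma_i$'s distinct, I pick a connected grading $X$ separating at least one pair of $\gamma_i$'s, group the $f_i$'s by $X$-degree, and invoke directness of the $X$-decomposition; each resulting null-subsum has at most $n-1$ terms, and induction on $n$ closes the argument. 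Compatibility with composition comes from the fact that, for composable SG-morphisms $f$ and $g$, the closed walks ${}^v\!g\cdot{}^v\!f$ and ${}^v(gf)$ differ only by insertion of a round-trip at $\tau(f)=\sigma(g)$ whose degree is trivial in every $X$; thus $\deg_X {}^v(gf)=\deg_X {}^v\!g\cdot\deg_X {}^v\!f$ for every $X$, so $P(gf)=P(g)\cdot P(f)$ in $\pigr$, and extending by $k$-bilinearity gives $P^\delta\cdot P^\gamma\subseteq P^{\delta\gamma}$.
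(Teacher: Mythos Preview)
Your proof is correct and follows essentially the same route as the paper: the $P$-degree of an SG-morphism is defined as $(\deg_X {}^v\!f)_X$, graded coherence is obtained via Proposition~\ref{constant}, and directness is proved by induction after choosing a connected grading that separates a pair of $P$-degrees. You add an explicit verification of compatibility with composition (which the paper leaves tacit), and the initial reference to Lemma~\ref{homogeneous morphisms in a SG} for the fact that every morphism is a sum of SG-morphisms is slightly off---that is just the definition of Schurian generated.
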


\begin{proof} In order to define a grading $P$ by the fundamental grading group we first define the degree of an SG-morphism $f$:
\begin{equation}
\label{P}\tag{3}
\deg_P f = \left(\deg_X {}^v\!f\right)_X = \left(\deg_X v_{\tau(f)}^{-1} \ \deg_Xf\  \deg_Xv_{\sigma(f)}\right)_X
\end{equation}
where $X$ runs over all connected gradings of $\B$.  Note that the morphisms involved are SG-morphisms hence they are homogeneous for any connected grading by the first part of Lemma \ref{SG homogeneous}. Consequently the right hand side of the equality makes sense. We check now that the above family is graded coherent.

Let $\mu:X\to Y$ be a morphism of gradings. By definition there exists a homogeneous automorphism $J$ such that
$$\mu(\deg_X {}^v\!f)=\deg_Y \left(HW(J)({}^v\!f)\right).$$
Proposition \ref{constant} insures that
 $$\deg_Y \left( HW(J)({}^v\!f)\right)=\deg_Y {}^v\!f.$$

Secondly for any $p\in\pigr$, the homogeneous component of degree $p$  is defined as the set of all  the sums of SG-morphisms of degree $p$.  Since any morphism is a sum of SG-morphisms it only remains to prove that the sum of the subspaces is direct. Assume $f_1+\dots +f_n =0$ where the  $f_i$'s are morphisms of distinct $P$-degrees, our purpose is to prove $f_1=f_2=\dots=f_n=0$. In case $n>1$ and since $\deg_P f_1\neq\deg_P f_2$ there exists a connected grading $X_0$  such that $\deg_{X_0} {}^v\!f_1\neq \deg_{X_0} {}^v\!f_2$. Let $$I_1 =\{i\ \mid\ \deg_{X_0}{}^v\!f_i = \deg_{X_0}{}^v\!f_1\} $$
and let $I_2$ be its complement in $\{1,\dots,n\}$. Note that $1\in I_1$ and $2\in I_2$. Since $X_0$ is a grading we infer $\sum_{i\in I_1} f_i =0$ and  $\sum_{j\in I_2} f_j =0$. The result follows by induction.
\qed
\end{proof}

\begin{lem}\label{identity is homogeneous} The identity automorphism is homogeneous from $P$ to any connected grading $X$. Moreover for any $P$-homogeneous morphism $f$
\[ \deg_P f = \left(\deg_X {}^v\!f\right)_X.\]
\end{lem}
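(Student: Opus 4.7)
The plan is to unwind the construction of $P$ given in the previous proposition and to exploit Lemma \ref{SG homogeneous}(1), namely that SG-walks (and in particular SG-morphisms) are homogeneous with respect to every grading. Let $f$ be a $P$-homogeneous morphism of degree $p$. By the very definition of the homogeneous components of $P$, there exist SG-morphisms $f_1,\dots,f_n$ with common source $\sigma(f)$ and common target $\tau(f)$, each of $P$-degree $p$, such that $f=\sum_i f_i$. Fix a connected grading $X$ and read off the $X$-coordinate of the equality of $P$-degrees: using formula \eqref{P}, for every $i$ one has
\[
\deg_X v_{\tau(f)}^{-1}\ \deg_X f_i\ \deg_X v_{\sigma(f)} = p_X,
\]
where $p_X$ denotes the $X$-entry of $p\in\pigr$. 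Since the connectors $v_{\sigma(f)}$ and $v_{\tau(f)}$ are fixed SG-walks, hence $X$-homogeneous with fixed degrees, this equality forces $\deg_X f_i$ to be the same element of $\Gamma(X)$ for every $i$.

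Next I will conclude that $f$, being a sum of $X$-homogeneous morphisms all of the same $X$-degree, is itself $X$-homogeneous of that common degree. This is precisely the statement that the identity functor is homogeneous from $P$ to $X$, which establishes the first assertion of the lemma. Finally, now that $f$ is known to be $X$-homogeneous, the identity
\[
\deg_X {}^v\!f = \deg_X v_{\tau(f)}^{-1}\ \deg_X f\ \deg_X v_{\sigma(f)}
\]
is available; combining it with the previous computation (with $\deg_X f_i$ replaced by the common value $\deg_X f$) yields $\deg_X {}^v\!f = p_X$. Letting $X$ run over all connected gradings gives $(\deg_X {}^v\!f)_X = p = \deg_P f$, which is the second claim.

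I do not foresee a genuine obstacle: the statement is essentially bookkeeping, dictated by the way $P$ was defined in the preceding proposition and by the automatic homogeneity of SG-walks. The only point requiring a moment of care is to observe that the summands $f_i$ share the source and target of $f$, so that the connectors $v_{\sigma(f)}$ and $v_{\tau(f)}$ factor out uniformly from the equality of $P$-degrees and can be cancelled to compare the $\deg_X f_i$ directly.
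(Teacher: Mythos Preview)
Your proof is correct and follows essentially the same route as the paper's: write a $P$-homogeneous $f$ as a sum of SG-morphisms of equal $P$-degree, read off the $X$-coordinate via formula \eqref{P}, cancel the fixed connectors to see that all summands share the same $X$-degree, and then assemble the displayed identity. If anything, your version is slightly more explicit than the paper's about the cancellation of the connectors and about why the first assertion (homogeneity of the identity functor) follows.
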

\begin{proof}
Let $f$ be a $P$-homogeneous morphism. By Lemma \ref{homogeneous morphisms in a SG} we know that  $f=\sum_if_i$ where  the $f_i$'s are  SG-morphisms of the  same $P$-degree. As a consequence for any connected grading $X$ and for any pair of indices $i$ and $j$
$$ \deg_{X} {}^v\!f_i= \deg_{X} {}^v\!f_j.$$
Consequently  ${}^v\!f= \sum {}^v\!f_i$ is $X$-homogeneous for any connected grading $X$ and $\deg_X{}^v\!f=\deg_X{}^v\!f_i$ for any $i$. Finally
$$\deg_P f= \deg_P f_i=\left(\deg_X {}^v\!f_i\right)_X = \left(\deg_X {}^v\!f\right)_X.$$  \qed
\end{proof}

Let $w$ be a $P$-homogeneous walk and let ${}^v\!w$ be the closed walk defined as before for morphisms.

\begin{lem}\label{connectors trivial degree}
A $P$-homogeneous walk $w$ is $X$-homogeneous for every connected grading $X$. Its $P$-degree is given by the same formula as above, namely:
$$\deg_P w = \left( \deg_X {}^v\!w\right)_X.$$
In particular the connectors are $P$-homogeneous of trivial $P$-degree.
\end{lem}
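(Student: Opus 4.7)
The plan is to reduce everything to Lemma \ref{identity is homogeneous} combined with a telescoping calculation that mirrors the one in the proof of Proposition \ref{same mu}.

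First I would observe that if $w = (f_n,\epsilon_n),\dots,(f_1,\epsilon_1)$ is $P$-homogeneous then by definition each $f_i$ is $P$-homogeneous, so by Lemma \ref{identity is homogeneous} each $f_i$ is $X$-homogeneous for every connected grading $X$. Hence $w$ is an $X$-homogeneous walk. The $P$-degree of $w$ is computed in $\pigr$ coordinate-wise, and using Lemma \ref{identity is homogeneous} on each $f_i$,
\[\deg_P w = \prod_{i=n}^{1} (\deg_P f_i)^{\epsilon_i} = \left(\prod_{i=n}^{1} (\deg_X{}^v\!f_i)^{\epsilon_i}\right)_X.\]

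The main step is then to show that the inner product equals $\deg_X{}^v\!w$. The key observation is the unified identity
\[(\deg_X{}^v\!f_i)^{\epsilon_i} = \deg_X v_{\tau(f_i,\epsilon_i)}^{-1}\ (\deg_X f_i)^{\epsilon_i}\ \deg_X v_{\sigma(f_i,\epsilon_i)},\]
which I would verify by splitting on whether $\epsilon_i = 1$ or $\epsilon_i = -1$; in the second case the roles of $\sigma(f_i)$ and $\tau(f_i)$ are swapped by passing to the formal inverse, and the group-theoretic inversion restores the expression above. Since $w$ is concatenated, $\sigma(f_{i+1},\epsilon_{i+1}) = \tau(f_i,\epsilon_i)$, so the connector-degrees between consecutive factors cancel telescopically, leaving
\[\prod_{i=n}^{1}(\deg_X{}^v\!f_i)^{\epsilon_i} = \deg_X v_{\tau(w)}^{-1}\ \deg_X w\ \deg_X v_{\sigma(w)} = \deg_X{}^v\!w.\]
This gives the claimed formula $\deg_P w = (\deg_X{}^v\!w)_X$.

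For the final assertion, I would apply the formula to $w = v_b$. Since $\sigma(v_b) = b_0$ and $\tau(v_b) = b$, one gets ${}^v\!v_b = v_b^{-1} v_b v_{b_0} = v_{b_0}$, and by the special choice in the construction, $v_{b_0}$ is the identity endomorphism at $b_0$, whose $X$-degree is trivial for every connected grading $X$. Therefore $\deg_P v_b = (1)_X$, the identity of $\pigr$. I expect the main (minor) obstacle to be the careful bookkeeping in verifying the unified identity above for virtual morphisms of both orientations; once that is set up, the telescoping and the connector computation are immediate.
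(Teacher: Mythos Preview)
Your proposal is correct and follows essentially the same approach as the paper: compute $\deg_P w$ as the product of the $\deg_P f_i^{\epsilon_i}$, use Lemma~\ref{identity is homogeneous} (and equality~\eqref{P}) on each factor, and observe that the connector degrees telescope; for the connectors themselves, reduce to the fact that $v_{b_0}$ is the identity. The only small point the paper makes explicit and you leave implicit is that the connectors are SG-walks (hence $P$-homogeneous, so the formula applies to $w=v_b$), and conversely your identity ${}^v\!v_b = v_{b_0}$ should be read at the level of degrees rather than as an equality of walks.
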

\begin{proof}
 The $P$-degree of $w$ is the product of the $P$-degrees of the homogeneous virtual morphisms involved which can be computed according to Lemma \ref{identity is homogeneous} and  equality \eqref{P}. Notice that the $X$-degrees of the connectors annihilate themselves.

 Finally the connectors are SG-walks, hence they are $P$-ho\-mo\-ge\-neous walks and their degree can be computed using the formula just proved and the fact that the connector at $b_0$ is the identity which is of trivial degree for any grading. \qed
\end{proof}

\begin{thm}\label{SG has universal grading}
A connected SG-category $\B$ with base object $b_0$ admits a universal grading with structural group isomorphic to $\Pi_1(\B,b_0)$.
\end{thm}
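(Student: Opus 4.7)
The plan is to take the grading $P$ of $\B$ by the fundamental grading group $\pigr$ constructed in the previous proposition (which need not be connected) and restrict it to the image of its degree map, obtaining a connected universal grading.

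First I will let $G$ denote the subgroup of $\pigr$ given by the image of $\deg_P$ on $P$-homogeneous closed walks at $b_0$; this is indeed a subgroup by Remark \ref{image is a group}. Restricting $P$ to $G$ — keeping the same homogeneous decomposition of each space of morphisms but replacing the structural group by $G$ — yields a grading $P|_G$ which is now connected by construction.

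Next, for any connected grading $X$ of $\B$, I will consider the projection $\pi_X: G \to \Gamma(X)$ defined by $\pi_X\left((\gamma_Y)_Y\right) = \gamma_X$, and show that $\pi_X$ is a morphism of gradings from $P|_G$ to $X$. The candidate homogeneous automorphism is the identity functor $\mathds{1}_\B$, which is homogeneous from $P$ (hence from $P|_G$) to $X$ thanks to Lemma \ref{identity is homogeneous}. To verify that the diagram in Definition \ref{morphism of grading} commutes, I will apply Lemma \ref{connectors trivial degree}: for any closed $P|_G$-homogeneous walk $w$ at $b_0$ one has $\deg_P w = \left(\deg_Y {}^v\!w\right)_Y$, and since the connector $v_{b_0}$ was chosen to be the identity endomorphism, ${}^v\!w = w$, so $\pi_X(\deg_P w) = \deg_X w = \deg_X(HW(\mathds{1})(w))$. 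Uniqueness of $\pi_X$ is then immediate from Corollary \ref{unique}. Hence $P|_G$ is a universal grading of $\B$.

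Finally, combining Theorem \ref{universal fundamental} (which gives $\Gamma(U) \simeq \pigr$ whenever a universal grading $U$ exists) with the comparison results of Section \ref{comparison} (namely $\pigr \simeq \Pi_1(\B,b_0)$ via Propositions \ref{deg mu} and \ref{same mu}), we obtain $G \simeq \Pi_1(\B,b_0)$, completing the proof. The principal subtlety is the connectedness issue: $P$ is naturally indexed by the full group $\pigr$, but only elements arising as degrees of actual closed walks are admissible for a connected grading, and the trick of restricting to the image subgroup $G$ is precisely what allows the invocation of Theorem \ref{universal fundamental}. The remaining verifications — that $\pi_X$ is a morphism of gradings and that it is unique — rest on the lemmas already assembled in this section, Corollary \ref{unique} being the specifically SG-feature that makes the whole argument work.
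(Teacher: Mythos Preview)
Your argument follows the paper's approach closely, but you skip one verification that the paper treats as a genuine claim: you assert that ``keeping the same homogeneous decomposition of each space of morphisms but replacing the structural group by $G$'' yields a grading $P|_G$. This is only legitimate if every $P$-homogeneous morphism already has $P$-degree lying in $G=\Im\,\deg_P$. That is not automatic: $G$ is defined via degrees of \emph{closed} walks at $b_0$, whereas a Schurian morphism $f\colon b\to b'$ is not closed, and its $P$-degree given by \eqref{P} is a priori just an element of $\pigr$.

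The paper fills this gap by forming the closed walk ${}^v\!f=v_{\tau(f)}^{-1}\,f\,v_{\sigma(f)}$ and using Lemma \ref{connectors trivial degree} (the connectors have trivial $P$-degree) to conclude $\deg_P f=\deg_P{}^v\!f\in G$. You already invoke this lemma later for the commutativity of the diagram, so inserting the same observation here repairs the argument with no new ingredients. After that fix your projection $\pi_X$ coincides with the paper's map $\mu$ (again by Lemma \ref{connectors trivial degree} applied to closed $w$ at $b_0$), and the rest of your proof --- commutativity via $J=\mathds{1}$, uniqueness via Corollary \ref{unique}, and the identification with $\Pi_1(\B,b_0)$ via Theorem \ref{universal fundamental} and Section \ref{comparison} --- matches the paper's line for line.
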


\begin{proof}
Recall from Remark \ref{image is a group} that $\Im\ \deg_P$ is a subgroup of the structural group. We claim that  $P$  can be restricted to a grading with structural group  $\Im\ \deg_P$ which we will denote $P\!\!\downarrow$. To this end it suffices  to check that the $P$-degree of a Schurian morphism $f$ is the $P$-degree of some closed $P$-homogeneous walk at $b_0$.   Note that since $P$ is a grading we have the following:
$$\deg_P {}^v\! f= \deg_Pv_{\tau(f)}^{-1}\ \deg_Pf\  \deg_Pv_{\sigma(f)}.$$
Since by Lemma \ref{connectors trivial degree}  the $P$-degrees of the connectors are trivial,  $\deg_Pf=\deg_P{}^v\!f.$
Consequently the grading $P\!\!\downarrow$ with structural group $\Im\ \deg_P$ exists. By construction this grading is connected. Next we will prove it is universal.

Let $X_0$ be a fixed connected grading of $\B$. Our purpose is to show the existence of a unique morphism of gradings $\mu:P\!\!\downarrow\to X_0$, that is the existence of a unique group morphism
$ \mu:\Gamma\left(P\!\!\downarrow\right)\to \Gamma(X_0)$ such that there exists at least one homogeneous automorphism $J$ of $\B$ making  the following diagram commutative:
$$
\xymatrix{
{}_{b_0}HW(\B,P\!\!\downarrow)_{b_0} \ar@{->>}[d]_{\deg_{P\!\downarrow}} \ar[r]^{ HW(J)} & {}_{b_0} HW(\B,X_0)_{b_0}\ar@{->>}[d]^{ \deg_{X_0}}\\
\Gamma(P\!\!\downarrow)=\Im\ \deg_P \ar[r]_{\mu}   &\Gamma({X_0}).}
$$
Consider the  group morphism $\mu$ defined as follows: let $\gamma$ be an element in $\Im\ \deg_P$, that is there exists   a closed $P$-homogeneous walk $w$ at $b_0$ such that $\deg_Pw=\gamma$. By Lemma \ref{connectors trivial degree}
$$\deg_P w= \left( \deg_X {}^v\!w\right)_X$$
and the latter equals $\left( \deg_Xw\right)_X$ since $w$ is already a closed walk at $b_0$. We set
$$\mu(\gamma)= \deg_{X_0}w.$$
Note that $\mu$ is well defined: let $w'$ be another closed $P$-homogeneous walk at $b_0$ representing $\gamma$ that is $\deg_Pw'=\gamma=\deg_Pw$, then $\deg_{X_0}w'=\deg_{X_0}w$.

According to Lemma \ref{identity is homogeneous} the identity automorphism of $\B$ is homogeneous from $P\!\!\downarrow$ to $X_0$. The morphism $\mu$ above makes the diagram with $J=\mathds{1}$ commutative.

 The above morphism is unique since by Corollary \ref{unique} between two connected gradings of an SG-category there is at most one morphism.

Theorem \ref{universal fundamental} asserts that the structural group of the universal grading is isomorphic to the fundamental group of the category.

 \qed
\end{proof}

\section{Examples}

We consider four examples. As mentioned in the Introduction the first one is at the source of the theory of the fundamental group \emph{\`{a} la Grothendieck}, it has a universal grading with cyclic fundamental group.  The second example has no universal grading but admits a versal grading and the fundamental group is trivial. Then we show that for a presented monomial Schurian $k$-category the fundamental  group of the presentation is isomorphic to the intrinsic fundamental group. Finally we consider an example in characteristic $p$  which has neither universal nor versal grading; its intrinsic fundamental group is the product of the infinite cyclic group and  the cyclic group of order $p$.

\subsection{A one-parameter family}\label{B_q}

Let $Q$ be the quiver

$$\xymatrix{
&._y\ar[rd]^\beta\\
{}_{x}.\ar[rr]_\gamma\ar[ru]^\alpha&&._z\ar[rr]_\delta&& ._{z'} } $$
and let $kQ$ be the linearization of its path category.
For each  $q\in k$ the one-dimensional vector space  $I_q = k(\delta\gamma-q\delta\beta\alpha)$ is a two-sided ideal of $kQ$. We denote $\B_q$ the quotient $k$-category $kQ/I_q$. Note that $\gamma$ is not a Schurian morphism, and does not belong to the Schurian generated subcategory.

We first recall that $\B_q$ is isomorphic to $\B_{q'}$ for all $q$ and $q'$. Indeed, let $F$ be the automorphism of $kQ$ which is the identity on the objects,  $$F(\gamma) = \gamma+ (q-q')\beta\alpha$$
and is the identity on the other arrows. Clearly  $F(I_q)=I_{q'}$ and it induces an isomorphism $\hat{F}:\B_q\to\B_{q'}$. In particular $\B_q$ is isomorphic to $\B_0$ for any $q$, in other words $\B_q$ is a trivial one-parameter deformation of $\B_0$.

\begin{rem}
 The non-intrinsic fundamental group considered by Bongartz, Gabriel \cite{boga} and Martinez-Villa, de la Pe\~{n}a \cite{MP} for representation theory purposes relies on a particular presentation by a quiver with relations of the $k$-category. In case $q\neq 0$ this non-intrinsic fundamental group is trivial  while it is the infinite cyclic group for $q=0$. One of the purposes of the fundamental group \emph{\`{a} la Grothendieck} we have considered is to provide a theory which does not depend on a presentation.
\end{rem}

 We will perform computations for an arbitrary value of the parameter $q$ although we know that it suffices to do it for a specific value, for instance $q=0$. We  do so in order to confirm in this  example the theory of the fundamental grading group in its intrinsic aspect, namely that it only depends on the isomorphism class of the $k$-category.

\begin{pro}
For any  non-trivial connected grading $X$ of $\B_q$ the morphism $\gamma - q\beta\alpha$ is homogeneous and the structural group is cyclic.
\end{pro}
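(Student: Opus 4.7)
My plan is to handle the two assertions separately. First I would establish homogeneity of $\gamma - q\beta\alpha$ using the relation $\delta(\gamma - q\beta\alpha) = 0$ coming from $\delta\gamma = q\delta\beta\alpha$. Then I would determine $\Gamma(X)$ through a spanning-tree computation on the underlying graph of homogeneous morphisms of $\B_q$.

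For the homogeneity claim, observe that every morphism space of $\B_q$ is one-dimensional except ${}_z\!\B_x = k\gamma \oplus k\beta\alpha$, so $\alpha$, $\beta$, $\delta$, $\beta\alpha$, $\delta\beta$ and $\delta\beta\alpha$ are Schurian, hence $X$-homogeneous with well-defined degrees $d_\alpha$, $d_\beta$, $d_\delta$, $d_\beta d_\alpha$, $d_\delta d_\beta$ and $d_\delta d_\beta d_\alpha$. A grading $X$ therefore either keeps ${}_z\!\B_x$ as a single homogeneous component --- in which case $\gamma - q\beta\alpha$ is trivially homogeneous --- or splits it into two one-dimensional lines. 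In the splitting case, one of these lines must contain $\beta\alpha$ (since $\beta\alpha$ is already homogeneous of degree $d_\beta d_\alpha$); let $u = c\gamma + d\beta\alpha$ span the other line, of some degree $s \neq d_\beta d_\alpha$. Then $\delta u$ is homogeneous of degree $d_\delta s$, yet it lies in the one-dimensional space $k\delta\beta\alpha$ whose nonzero elements have degree $d_\delta d_\beta d_\alpha \neq d_\delta s$. Hence $\delta u = 0$, and expanding $\delta u = (cq + d)\delta\beta\alpha$ via $\delta\gamma = q\delta\beta\alpha$ forces $d = -cq$. Thus $u$ is a nonzero scalar multiple of $\gamma - q\beta\alpha$, proving homogeneity.

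For cyclicity, set $d_{\gamma'} := \deg_X(\gamma - q\beta\alpha)$. Since $X$ is connected, $\Gamma(X)$ coincides with the image of the degree map on closed homogeneous walks at $x$. I would take the spanning tree $\{\alpha, \beta, \delta\}$ rooted at $x$ in the underlying graph of homogeneous morphisms; the non-tree edges are $\beta\alpha$, $\gamma - q\beta\alpha$, $\delta\beta$, and $\delta\beta\alpha$, and they generate all fundamental loops. A direct computation with the walk-degree formula shows that the loops associated to $\beta\alpha$, $\delta\beta$ and $\delta\beta\alpha$ each collapse to $1$, because the degree of each of these non-tree edges matches the product of the degrees along its tree path (e.g.\ the $\delta\beta\alpha$-loop contributes $d_\alpha^{-1} d_\beta^{-1} d_\delta^{-1}(d_\delta d_\beta d_\alpha) = 1$). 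Only the loop through $\gamma - q\beta\alpha$ survives, yielding the generator $g := d_\alpha^{-1} d_\beta^{-1} d_{\gamma'}$. Hence $\Gamma(X) = \langle g \rangle$ is cyclic, and non-triviality of $X$ forces $g \neq 1$.

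The main obstacle is the splitting case of homogeneity: one must recognize that the relation $\delta(\gamma - q\beta\alpha) = 0$, combined with the one-dimensionality of $k\delta\beta\alpha$, rigidly pins down the second homogeneous line of ${}_z\!\B_x$ as $k(\gamma - q\beta\alpha)$. Once this is established, the cyclicity reduces to a routine spanning-tree argument, since every non-tree edge other than $\gamma - q\beta\alpha$ is a Schurian composition whose degree factors through the chosen tree.
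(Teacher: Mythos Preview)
Your proof is correct and follows essentially the same line as the paper's. Both arguments pin down the second homogeneous line of ${}_z\!\B_x$ by composing with $\delta$ and using that ${}_{z'}\!\B_x$ is one-dimensional; the paper phrases the contrapositive (if $q+l\neq 0$ then $c=ba$, forcing the grading trivial), while you argue directly that $\delta u=0$ forces $u\in k(\gamma-q\beta\alpha)$. Your spanning-tree computation of $\Gamma(X)$ is a slightly more formal packaging of the paper's observation that $\Im\,\deg_X=\{(c^{-1}ba)^i\}$, but the content is identical: all loops except the one through $\gamma-q\beta\alpha$ collapse because the other homogeneous morphisms are compositions of the tree edges $\alpha,\beta,\delta$.
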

\begin{proof}
Let $X$ be a connected grading.
Since $\alpha$, $\beta$ and $\delta$ are Schurian morphisms they are homogeneous of degrees denoted respectively $a$, $b$ and $d$. The space of morphisms from $x$ to $z$ is two-dimensional and $\beta\alpha$ is already homogeneous, let $\gamma + \l\beta\alpha$ be the other homogeneous morphism complementing $\beta\alpha$; we denote by $c$ its degree.

If $c=ba$ the homogeneous closed walks at $x$ are all of trivial degree, implying that the structural group is trivial since $X$ is connected.

 Note that $\delta(\gamma+l\beta\alpha)$  is homogeneous of degree $dc$. Moreover
$$\delta(\gamma+l\beta\alpha)= (q+l)\delta\beta\alpha.$$
Consequently if  $q+l\neq 0$ the degrees of those morphisms coincide, namely  $dc=dba$, then $c=ba$ and the grading is trivial.
If $X$ is not trivial then $l=-q$, the morphism $\gamma - q\beta\alpha$ is homogeneous of degree $c$ and $c\neq ba$. Consider
$$\deg_X: {}_xHW(\B_q,X)_x\to\Gamma (X).$$
Then $\Im\ \deg_X=\{ \left(c^{-1}ba\right)^i\  \mid\ i\in\ \mathds{Z}\}$. Since $X$ is connected $\Im\ \deg_X=\Gamma(X)$.\qed
\end{proof}

 Let $U$ be  the grading of $kQ$ by the infinite cyclic group $T=<t>$  such that  $\alpha$, $\beta$ and $\delta$  are of  trivial degree while $\gamma - q\beta\alpha$ is of degree $t$. The ideal $I_q$ is homogeneous so the grading is well defined on $\B_q$ and we still denote it by $U$.

\begin{pro}
The grading $U$ of $\B_q$ is universal.
\end{pro}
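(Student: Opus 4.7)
The plan is to show directly that for every connected grading $X$ of $\B_q$ there exists a unique morphism of gradings $\mu : U \to X$. Since $\Gamma(U) = T = \langle t \rangle$ is infinite cyclic, $\mu$ is determined by $\mu(t)$, so the work reduces to fixing $\mu(t)$ and exhibiting a witnessing automorphism of $\B_q$. I split into two cases according to whether $X$ is trivial, matching the dichotomy in the previous proposition.

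If $X$ is trivial, the only candidate $T \to \{1\}$ is witnessed by $\mathds{1}$ and there is nothing more to do. Otherwise the previous proposition supplies degrees $a = \deg_X\alpha$, $b = \deg_X\beta$, $c = \deg_X(\gamma - q\beta\alpha)$, $d = \deg_X\delta$ with $c \neq ba$, and identifies $\Gamma(X)$ with the cyclic group generated by $c^{-1}ba$. I would define
\[
\mu(t) = a^{-1}b^{-1}c,
\]
which is a generator of $\Gamma(X)$, and take the identity automorphism $\mathds{1}$ as witness. The reason $\mathds{1}$ is homogeneous from $U$ to $X$ is that the only hom-space of $\B_q$ of dimension larger than one is ${}_z\B_x$, and its $U$-homogeneous decomposition $k\beta\alpha \oplus k(\gamma - q\beta\alpha)$ coincides with its $X$-homogeneous decomposition by the previous proposition. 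Commutativity of the defining diagram reduces to checking the equality $\mu(\deg_U w) = \deg_X w$ on closed $U$-homogeneous walks at $x$. On the walk
\[
w_0 = (\gamma - q\beta\alpha, -1), (\beta, 1), (\alpha, 1)
\]
one computes $\deg_U w_0 = t^{-1}$ and $\deg_X w_0 = c^{-1}ba = \mu(t^{-1})$. The general case follows from a standard graph-theoretic argument: since $T$ and $\Gamma(X)$ are abelian, both $\deg_U$ and $\deg_X$ factor through $H_1$ of the underlying graph of $\B_q$, and they agree on the unique generator on which $\deg_U$ is non-trivial; the remaining generating loops involve only $\alpha, \beta, \delta$ and the composites $\beta\alpha, \delta\beta, \delta\beta\alpha$ whose multiplicatively matching $X$-degrees $ba, db, dba$ force the corresponding $\deg_X$ values to be trivial as well.

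Uniqueness is the easier half. If $\mu'$ is another morphism witnessed by an automorphism $J$ of $\B_q$, then $J$ acts by non-zero scalars on the Schurian morphisms $\alpha, \beta, \delta$, preserving their $X$-degrees. On ${}_z\B_x$ the $U$-to-$X$ homogeneity of $J$ forces $J(\gamma - q\beta\alpha)$ to be a non-zero scalar multiple of $\gamma - q\beta\alpha$, since that spans the unique $X$-homogeneous line of degree different from $ba$. Consequently $\deg_X HW(J)(w_0) = c^{-1}ba$, forcing $\mu'(t) = a^{-1}b^{-1}c = \mu(t)$.

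The main obstacle to watch is that $\B_q$ is \emph{not} Schurian generated, since $\gamma$ is not an SG-morphism, so Theorem \ref{SG has universal grading} cannot be invoked. The argument instead relies on the rigidity of the previous proposition's classification of non-trivial connected gradings, which effectively compresses the whole question onto the single generator $t$ of the infinite cyclic group $T$.
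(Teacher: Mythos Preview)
Your proof is correct and follows essentially the same strategy as the paper: the identity automorphism witnesses existence (since the unique two-dimensional hom-space has the same homogeneous decomposition for $U$ and for any non-trivial $X$), and uniqueness comes from showing that any homogeneous automorphism $J$ must send $\gamma - q\beta\alpha$ to a scalar multiple of itself, hence $\deg_X \circ HW(J) = \deg_X \circ HW(\mathds{1})$. Your value $\mu(t) = a^{-1}b^{-1}c$ is in fact the one compatible with $J = \mathds{1}$ (the paper writes the inverse $c^{-1}ba$); and where the paper simply asserts commutativity of the diagram, you supply the potential/$H_1$ argument that justifies it on all closed walks rather than just the generating loop.
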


\begin{proof}
For a given non-trivial connected grading $X$ we will prove the existence of  a unique morphism of gradings $\mu : U\to X$. Note that the identity automorphism is homogeneous from $U$ to $X$. We use the same notations as in the previous proof. The group map $\mu : T\to \Gamma(X)$ is given by $\mu(t)=c^{-1}ba$.  The following diagram is commutative:
$$
\xymatrix{
{}_{x}HW(\B,U)_{x} \ar@{->>}[d]_{\deg_U} \ar[r]^{HW(\mathds{1})} & {}_{x}HW(\B,X)_{x}\ar@{->>}[d]^{\deg_X}\\
T \ar[r]_{\mu}   &\Gamma(X).}
$$
In order to prove that the morphism is unique let $X$ and $Y$ be non-trivial connected gradings. As before,  the identity automorphism is homogeneous from $X$ to $Y$. We assert that for any homogeneous automorphism $J$ of $\B_q$ from $X$ to $Y$ the following holds:
\begin{equation}
\label{as identity}\tag{4}
\deg_Y \circ HW(J)  = \deg_Y \circ HW(\mathds{1}).
\end{equation}

To this end, we show that $J$  is multiplication by non-zero scalars when evaluated on homogeneous morphisms. Indeed the homogeneous components are one-dimensional and coincide for $X$ and $Y$. Then $J$ multiplies by non-zero scalars the Schurian morphisms $\alpha$,  $\beta$ and $\delta$ as well as their possible compositions. Since $J$ is homogeneous  $J(\gamma - q\beta\alpha)$ is homogeneous but cannot be a scalar multiple of $\beta\alpha$ because this morphism is already in the image of $J$.  Then $J(\gamma - q\beta\alpha)$ is a non-zero scalar multiple of $\gamma - q\beta\alpha$.

Finally let $\mu': U\to X$ be a morphism of gradings with corresponding homogeneous automorphism $J$; we have just proved  that $\deg_X\circ HW(J)=\deg_X\circ HW(\mathds{1})$, then $\mu'\ \deg_U = \mu\ \deg_U$ and $\mu=\mu'$ since $\deg_U$ is surjective.\qed
\end{proof}
\begin{cor}
For any  $q\in k$ the intrinsic fundamental group $\Pi_1(\B_q,x)$ is infinite cyclic .
\end{cor}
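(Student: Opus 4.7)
The plan is to simply chain together the results already established. By the preceding proposition, the grading $U$ with structural group $T=\langle t\rangle$ is a universal grading of $\B_q$. Theorem \ref{universal fundamental} then yields $\pigr[\B_q,x]\simeq \Gamma(U)=T$, and the main result of Section \ref{comparison} (the equality of sets of relevant morphisms in Proposition \ref{same mu} combined with the comparison of coherent and graded coherent families) gives the isomorphism between $\pigr[\B_q,x]$ and the intrinsic fundamental group $\Pi_1(\B_q,x)$.

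More concretely, first I would invoke the previous proposition to note that the universal grading of $\B_q$ exists and has structural group $T$, which is infinite cyclic by construction (it is generated by the single degree $t$ assigned to $\gamma - q\beta\alpha$, with no relations imposed since the ideal $I_q$ is $U$-homogeneous of trivial degree). Second, I would apply Theorem \ref{universal fundamental} to conclude $\pigr[\B_q,x] \simeq T$. Third, I would invoke the result of Section \ref{comparison} stating that the fundamental grading group and the fundamental intrinsic group coincide, to obtain $\Pi_1(\B_q,x) \simeq \pigr[\B_q,x] \simeq T$.

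Since $T$ is by definition the infinite cyclic group, this finishes the proof. There is no real obstacle here: the work has all been done in the two previous propositions and in the general theory. The only verification worth highlighting is the independence of $q$: although the computations were carried out for arbitrary $q$, the isomorphism $\B_q \simeq \B_0$ established at the beginning of the subsection means this single conclusion covers the whole one-parameter family, as expected from the intrinsic nature of the fundamental group.
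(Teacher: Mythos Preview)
Your proposal is correct and matches exactly the implicit argument the paper intends: the corollary is stated without proof precisely because it follows immediately from the preceding proposition ($U$ is universal with $\Gamma(U)=T$), Theorem \ref{universal fundamental}, and the identification of the two fundamental groups in Section \ref{comparison}. There is nothing to add.
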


\subsection{The Kronecker category}

We will prove that the Kronecker category $\K$ considered in Remark \ref{K} does not have a universal grading, instead there exists a unique versal grading with structural group the infinite cyclic group. Its fixed subgroup and the intrinsic fundamental group of $\K$ are trivial.

\begin{lem}
Every non-trivial connected grading $X$ of $\K$ has cyclic structural group and it is determined  by the choice of two linearly independent morphisms $\alpha$,$\beta$  in ${}_y\K_x$ and the assignment of their  degrees $a$ and $b$ verifying that $b^{-1}a$ is a generator of $\Gamma(X)$.
\end{lem}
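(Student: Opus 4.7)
The plan is to exploit that $\End(x) = \End(y) = k$, which forces the identity at each object to be $X$-homogeneous of degree $1$: if $1_b \in X^s\End(b)$, then $1_b = 1_b \cdot 1_b$ gives $s = s^2$, hence $s = 1$. Since $\K$ has no morphisms from $y$ to $x$, all the grading data is captured by the decomposition of the two-dimensional space ${}_y\K_x$ into homogeneous components, which is either a single $2$-dimensional component or a direct sum of two $1$-dimensional components.

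I would next rule out the first case for a non-trivial connected grading. If ${}_y\K_x = X^s\,{}_y\K_x$ for some $s$, then any closed walk at $x$ or $y$ must alternate virtual morphisms $(f, 1)$ and $(g, -1)$ between $x$ and $y$ (interspersed with identities of degree $1$), and its $X$-degree telescopes to a product of matched factors $s$ and $s^{-1}$, hence equals $1$. By connectedness $\Gamma(X)$ would then be trivial, contradicting the hypothesis. So ${}_y\K_x = k\alpha \oplus k\beta$ for two linearly independent morphisms of distinct degrees $a$ and $b$.

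The closed walk $(\beta, -1)(\alpha, 1)$ at $x$ has $X$-degree $b^{-1}a$, and every closed walk at $x$ or $y$ is (up to identity insertions) a concatenation of such elementary walks and their inverses, so $\Im\deg_X$ equals the cyclic subgroup $\langle b^{-1}a \rangle$; connectedness forces $\Gamma(X)$ to coincide with this cyclic group. Conversely, any choice of a basis $\alpha, \beta$ of ${}_y\K_x$ together with elements $a, b$ of a group $\Gamma$ such that $b^{-1}a$ generates $\Gamma$ unambiguously prescribes the homogeneous components $X^a\,{}_y\K_x = k\alpha$ and $X^b\,{}_y\K_x = k\beta$; compatibility with composition is automatic since the only non-trivial compositions involve the identities of degree $1$, yielding a well-defined connected grading. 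The main obstacle is formulating the telescoping bookkeeping for general closed walks cleanly, so as to confirm that the degree map indeed lands in $\langle b^{-1}a\rangle$ and that connectedness is equivalent to the generator condition.
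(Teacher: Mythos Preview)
Your proposal is correct and follows essentially the same approach as the paper's proof: both rule out the single-homogeneous-component case by observing that closed walks then have trivial degree (your telescoping argument makes explicit what the paper asserts in one line), then choose a homogeneous basis $\{\alpha,\beta\}$ of ${}_y\K_x$ with distinct degrees and observe that closed walks at $x$ have degrees in $\langle b^{-1}a\rangle$, forcing $\Gamma(X)$ to be cyclic by connectedness. Your additional remarks on the degree of the identities and the explicit elementary walk $(\beta,-1)(\alpha,1)$ are helpful elaborations, but the structure of the argument is the same.
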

\begin{proof}
Let $X$ be a connected grading of $\K$. In case the entire ${}_y\K_x$ is homogeneous every closed walk at $x$ is homogeneous of trivial degree, hence the connected grading is trivial since the degree map is surjective.

In case $X$ is not trivial, there exist two one-dimensional homogeneous components and we can choose a homogeneous basis $\{\alpha, \beta\}$ with  distinct degrees denoted $a$ and $b$. Observe that the degrees of the closed homogeneous walks at $x$ are powers of $b^{-1}a$, which shows that $\Gamma(X)$ is cyclic generated by $b^{-1}a$.

Conversely a non-trivial connected grading can be constructed following this pattern once a basis is given as well as two elements in the cyclic group such that their difference is a generator of it.\qed
\end{proof}

\begin{pro}
$\K$ does not admit a universal grading.
\end{pro}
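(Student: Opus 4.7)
My plan is to rule out a universal grading by exhibiting an explicit connected grading of $\K$ with a non-identity self-morphism. Let $Z$ be the connected grading of $\K$ with structural group $\ZZ$ (written additively) in which $\alpha$ has degree $1$ and $\beta$ has degree $0$. First, I would verify that $Z$ is connected by observing that the closed homogeneous walk $w=(\beta,-1)(\alpha,1)$ at $x$ has $Z$-degree $1$, which generates $\ZZ$. Next I would exhibit two distinct self-morphisms $Z\to Z$. The identity arises from $J=\mathds{1}$. For the second, let $J_s$ be the $k$-automorphism of $\K$ that is the identity on objects and on every hom-space except ${}_y\K_x$, on which it swaps $\alpha$ and $\beta$. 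Since $J_s$ permutes the two $Z$-homogeneous components $k\alpha$ and $k\beta$, it is homogeneous from $Z$ to $Z$; computing on $w$ gives $\deg_Z HW(J_s)(w)=-1$, so the induced group map is the inversion $\iota:n\mapsto -n$, distinct from the identity.

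Suppose now that a universal grading $U$ exists, and let $\mu:U\to Z$ be the unique morphism. The composite $\iota\circ\mu$ is again a morphism $U\to Z$, so uniqueness forces $\iota\circ\mu=\mu$, that is, $\mu(\gamma)=-\mu(\gamma)$ for every $\gamma\in\Gamma(U)$. Since $\ZZ$ is torsion-free, $\mu$ must be the zero map.

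To derive a contradiction I would show that no homogeneous automorphism $J$ of $\K$ can realise the zero map from $U$ to $Z$. If $U$ is the trivial grading, every morphism in $\K$ is $U$-homogeneous, so such a $J$ would send the entire two-dimensional space ${}_y\K_x$ into the union $k\alpha\cup k\beta$ of $Z$-homogeneous vectors, which is impossible for a linear automorphism of a two-dimensional space. If $U$ is non-trivial, the preceding lemma furnishes a basis $\{\alpha_U,\beta_U\}$ of ${}_y\K_x$ made of $U$-homogeneous morphisms with distinct $U$-degrees $u_1,u_2$; any homogeneous $J:U\to Z$ must then map this basis to $\{\lambda\alpha,\nu\beta\}$ in some order, with $\lambda,\nu\in k^\times$. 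Applying $HW(J)$ to the closed walk $w'=(\beta_U,-1)(\alpha_U,1)$ at $x$, whose $U$-degree $u_2^{-1}u_1$ is nontrivial, yields $\deg_Z HW(J)(w')=\pm 1\neq 0$, contradicting the requirement $\mu(\deg_U w')=0$. The main subtlety is tracking the two possible orderings in the non-trivial case and confirming that both give a nonzero result, after which the conclusion is forced.
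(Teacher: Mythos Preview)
Your proof is correct and rests on the same key observation as the paper's---the swap automorphism $\alpha\leftrightarrow\beta$ furnishes a non-identity self-morphism of a connected grading---but you package the argument differently. The paper argues by case analysis on the putative universal grading $U$ itself: if $\Gamma(U)$ is not of order $2$, the swap gives a non-identity endomorphism of $U$, violating uniqueness of $U\to U$; if $\Gamma(U)$ is finite cyclic (including the trivial and order-$2$ cases), there is no morphism from $U$ to any infinite-cyclic grading. You instead fix a single target grading $Z$, use the swap to produce two distinct morphisms $Z\to Z$, and conclude via uniqueness that the map $\mu:U\to Z$ must be identically zero; you then show directly that the zero map cannot be realised by any homogeneous automorphism, splitting into the trivial and non-trivial $U$ cases. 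The paper's route is slightly quicker because it applies the swap to $U$ rather than to a fixed target, avoiding the final analysis of why the zero map is unrealisable; your route has the virtue of making explicit why the finite-structural-group case fails (the paper is terse there), and it illustrates a general principle---non-trivial automorphisms of any single connected grading already obstruct universality.
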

\begin{proof}
Let $X$ be a non-trivial connected grading. We assert that $X$ has an automorphism of gradings with group map $\mu$ determined by $$\mu(b^{-1}a)=\left(b^{-1}a\right)^{-1}=a^{-1}b$$ and homogenous automorphism $J$ given by $$J(\alpha)=\beta\ \mbox{and}\ J(\beta)=\alpha$$
making  the required diagram commutative. This automorphism is not the identity unless the structural group is  of order $2$, hence $X$ is not universal out of this case.

If the structural group is finite cyclic (in particular of order $2$) the grading is not universal since there is no map from it to any grading by the infinite cyclic group.\qed

\end{proof}

Let $T$ be an infinite cyclic group with a given generator $t$. We consider a grading $V$ of $\K$ with structural group $T$ as follows. Let  $\{\alpha,\beta\}$ be a basis of ${}_y\K_x$ and we set $\deg_V\alpha =t$ and $\deg_V\beta =1$. Note that $V$ is connected.

\begin{pro}
The grading $V$ is versal. Moreover $\mathsf{Fix}(V)$ and $\Pi_1(\K,x)$ are trivial.
\end{pro}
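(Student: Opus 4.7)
The plan is to dispatch the three assertions in sequence, exploiting the classification of non-trivial connected gradings provided by the previous lemma.

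For versality, I would start from an arbitrary non-trivial connected grading $X$ of $\K$. The lemma gives a basis $\{\alpha',\beta'\}$ of ${}_y\K_x$ with $X$-degrees $a$ and $b$ such that $b^{-1}a$ generates $\Gamma(X)$. I would define the $k$-functor $J:\K\to\K$ which is the identity on objects and on the endomorphism algebras and sends $\alpha\mapsto\alpha'$, $\beta\mapsto\beta'$; since both pairs form bases of the only non-trivial space of morphisms ${}_y\K_x$, and since $\K$ has no compositions involving ${}_y\K_x$ beyond the scalar actions of the endomorphism algebras, $J$ is a well-defined $k$-automorphism. By construction it is homogeneous from $V$ to $X$. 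Then I would define $\mu:T\to\Gamma(X)$ on the generator by $\mu(t)=b^{-1}a$. The commutativity of the diagram of Definition \ref{morphism of grading} need only be checked on a $V$-homogeneous closed walk at $x$ whose $V$-degree generates $T$, for instance $w=(\beta,-1)(\alpha,1)$, for which $\deg_V w=t$ and $\deg_X HW(J)(w)=b^{-1}a=\mu(t)$. The trivial connected grading admits the trivial morphism from $V$, so every connected grading is the target of at least one morphism from $V$, and $V$ is versal.

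For $\mathsf{Fix}(V)$, the strategy is to exhibit a single self-morphism of $V$ whose induced group map inverts $t$. I would take $J:\K\to\K$ to be the $k$-automorphism that swaps $\alpha\leftrightarrow\beta$ (and is the identity on objects and endomorphism algebras). Then $J(\alpha)=\beta$ has $V$-degree $1$ and $J(\beta)=\alpha$ has $V$-degree $t$, so $J$ is homogeneous from $V$ to $V$. Evaluating on $w=(\beta,-1)(\alpha,1)$ gives $HW(J)(w)=(\alpha,-1)(\beta,1)$ with $V$-degree $t^{-1}$, whereas $\deg_V w=t$. Hence the associated morphism of gradings $\mu_J:V\to V$ satisfies $\mu_J(t)=t^{-1}$. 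Any element $t^n\in\mathsf{Fix}(V)$ would have to satisfy $t^n=t^{-n}$ in the infinite cyclic group $T$, forcing $n=0$; thus $\mathsf{Fix}(V)$ is trivial.

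For the fundamental group, Proposition \ref{fix} provides an injective group map $\pigr\hookrightarrow\mathsf{Fix}(V)$, and the isomorphism $\pigr\simeq\Pi_1(\K,x)$ established in Section \ref{comparison} together with the previous step yields $\Pi_1(\K,x)=1$. The only delicate aspect of the plan is the bookkeeping of inverses produced by the $\epsilon=-1$ virtual morphisms in the walks used to test the diagram; there is no genuinely hard step, because in $\K$ any pair of linearly independent elements of ${}_y\K_x$ can be interchanged by a $k$-automorphism, which reduces every construction to a change of basis.
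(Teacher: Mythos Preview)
Your proof is correct and follows essentially the same route as the paper: the same automorphism $J(\alpha)=\alpha',\ J(\beta)=\beta'$ with $\mu(t)=b^{-1}a$ for versality, the same swap $\alpha\leftrightarrow\beta$ (which the paper invokes from the preceding proposition) to kill $\mathsf{Fix}(V)$, and the same appeal to Proposition~\ref{fix} for $\Pi_1(\K,x)$. One small caution: saying the diagram ``need only be checked'' on a single walk of $V$-degree $t$ tacitly assumes $\deg_X\circ HW(J)$ already factors through $\deg_V$; strictly you should check the four elementary loops $(g,-1)(f,1)$ with $f,g\in\{\alpha,\beta\}$, but this is routine and the paper itself omits the verification entirely.
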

\begin{proof}
Let $X$ be a non-trivial connected grading with homogeneous linearly independent morphisms $\alpha'$ and $\beta'$ of  degrees $a$ and $b$. The group map $\mu: T\to \Gamma(X)$ given by $\mu(t)=b^{-1}a$ is a morphism of gradings. Indeed  the homogeneous automorphism $J$ determined by $J(\alpha)=\alpha'$ and $J(\beta)=\beta'$ makes  the required diagram commutative.

In the proof of the preceding proposition we have shown that $V$ admits a non-trivial grading automorphism with group map sending $t$ to $t^{-1}$ hence its structural group does not have fixed elements except $1$. By Proposition \ref{fix} the fundamental group is also trivial. \qed
\end{proof}

Note that all the connected gradings with infinite cyclic group are isomorphic by  analogous computations as  before. Hence the versal grading is unique up to isomorphisms of gradings.

\subsection{Monomial Schurian categories}
By definition a monomial presentation of a $k$-category is as follows: let $Q$ be a finite quiver, $kQ$ be the path category  and $I$ be a two-sided ideal of $kQ$ generated by a set of paths of length at least $2$. Assume moreover that all paths of a given length $n$ belong to $I$.

Then the  fundamental group of the presentation, as considered in \cite{boga,MP},  is clearly isomorphic to the topological fundamental group of the graph underlying $Q$.

Recall from \cite{CRS DOC 11} that  a $k$-category $\B$ is Schurian if ${}_y\B_x$ is zero ore one-dimensional for any objects $x$ and $y$. In that case we have proved in this paper that the intrinsic fundamental group of $\B$ is isomorphic to the topological fundamental group of the  associated  CW-complex (Definition 3.1).

Assume now that $B=kQ/I$ is monomial and Schurian. Then it is easy to prove that the CW-complex and the underlying graph of $Q$ have the same homotopy type. Consequently the fundamental group of the presentation and the intrinsic fundamental group are isomorphic.

\subsection{Truncated polynomial algebras in finite characteristic}

Let $k$ be a field of characteristic $p$, let $C_p=<t\ \mid\ t^p=1>$ be the cyclic group of order $p$ and let $B=kC_p$ be the group algebra which is isomorphic to $k[x]/(x^p)$ through the isomorphism assigning $(x-1)$ to $t$. We consider $B$ as a $k$-category with a single object having $B$ as endomorphisms.

Connected gradings of $B$ have been studied in detail in \cite[Section 5, p. 640]{CRS ANT 10}. There are two families of non-trivial connected gradings as follows:

\begin{itemize}
\item
\textbf{Invertible.} In case there exists a non-scalar invertible $X$-homogeneous element in $B$, then all the $X$-homogeneous elements are invertible and the structural group is $C_p$. So $X$ is  isomorphic to the natural grading of $kC_p$. Moreover $X$ is simply connected, which means that if $Y$ is a connected grading and $\mu:Y\to X$ is a morphism then $\mu$ is an isomorphism.
\item
\textbf{Maximal.} In case every $X$-homogeneous non scalar element belongs to the maximal ideal $(x)$, the structural group is cyclic and $X$ is isomorphic to a quotient of the natural grading of $k[x]/(x^p)$ where $x$ is homogeneous. When the structural group is infinite cyclic the grading is simply connected.
\end{itemize}

Any automorphism of $B$ preserves the maximal ideal, hence there are no morphisms of gradings between both families. There is no universal nor versal grading and  the fundamental group is $T\times C_p$.


\footnotesize
\noindent C.C.:
\\Institut de math\'{e}matiques et de mod\'{e}lisation de Montpellier I3M,\\
UMR 5149\\
Universit\'{e}  Montpellier 2,
\\F-34095 Montpellier cedex 5,
France.\\
{\tt Claude.Cibils@math.univ-montp2.fr}

\noindent M.J.R.:
\\Departamento de Matem\'atica,
Universidad Nacional del Sur,\\Av. Alem 1253\\8000 Bah\'\i a Blanca,
Argentina.\\ {\tt mredondo@criba.edu.ar}

\noindent A.S.:
\\IMAS y Departamento de Matem\'atica,
 Facultad de Ciencias Exactas y Naturales,\\
 Universidad de Buenos Aires,
\\Ciudad Universitaria, Pabell\'on 1\\
1428, Buenos Aires, Argentina. \\{\tt asolotar@dm.uba.ar}

\end{document}